\documentclass[11pt, a4paper, oneside]{article}
\usepackage{amsmath, amssymb, amsthm, amsfonts, amsxtra, latexsym, amscd,
pb-diagram,  graphics, hyperref}
\usepackage [all]{xy}

\usepackage{hyperref}

\theoremstyle{plain}

\theoremstyle{plain}
\newtheorem{theorem}{Theorem}

\newtheorem{corollary}[theorem]{Corollary}
\newtheorem{lemma}[theorem]{Lemma}
\newtheorem{proposition}[theorem]{Proposition}

\newcommand{\ri}{\rightarrow }
\newcommand{\Fa}{\breve{F}}
\newcommand{\Lo}{\widehat{L}}

\newcommand{\A}{\mathcal{A}}

\newcommand{\Z}{\mathbb{Z}}

\newcommand{\Lh}{\frak{L}}
\newcommand{\Rh}{\frak{R}}
\DeclareMathOperator{\Aut}{Aut}
\newcommand{\Fm}{\widetilde{F}}
\def\tx{\otimes}
\def\ts{\oplus}
\def\Fn{\widetilde F}


\begin{document}


\title{Cohomological Classification of Ann-categories}

\author{Nguyen Tien Quang}
\maketitle





\begin{abstract}
An  Ann-category is a categorification of  rings. Regular
Ann-categories were classified by Shukla   cohomology of algebras.
In this paper, we state and prove the precise theorem on
classification of Ann-categories in the general case based on
Mac Lane cohomology of rings.

\end{abstract}
{\bf Keywords:} Ann-category, Ann-functor,  categorical ring, Mac Lane cohomology,  Shukla cohomology\\
{\bf 2010 Mathematics Subject Classification:}  18D10, 16E40



\section{Introduction }
Categories with monoidal structures $\oplus, \otimes$ (or {\it
categories with distributivity constraints})
 were originally considered by   Laplaza in \cite {La1}.   Kapranov and
 Voevodsky  \cite {KV}
  omitted conditions related to
 the commutativity constraint with respect to $\otimes$ in the axioms of Laplaza
  and called these categories {\it ring categories}.

In an alternative approach,  monoidal categories can be ``refined''
to become {\it categories with group structure} if the objects are
all invertible (see \cite {La2}, \cite {SR}). When the underlying
category
  is a {\it groupoid} (that is,  every morphism is
  an isomorphism), we obtain the notion of  {\it monoidal category group-like}  \cite {FW},
   or   {\it Gr-category}  \cite {Si}.
  These categories can be classified by the cohomology group $H^{3}(\Pi, A)$ of groups.

 In 1987,  Quang  \cite {Q1} introduced the notion of  {\it Ann-category}  which is a
categorification of
   rings. Ann-categories are  symmetric Gr-categories (or   Picard categories)
  equipped with a monoidal structure $\otimes$.
 Since all objects are invertible and all morphisms are isomorphisms,
  the  axioms of an Ann-category are much fewer than those of
  a ring category (see \cite {PQT}).
The first two invariants of an Ann-category $\A$ are the ring
$R=\pi_0\A$ of isomorphism classes of the objects in $\A$ and the $
R$-bimodule $M=\pi_1\A = \mathrm{Aut}_{\A}(0).$ Via the structure
transport, we can construct an Ann-category of  type $(R,M)$ which
is Ann-equivalent to $\A.$ A family of constraints of $\A$ induces a
5-tuple of  functions $(\xi, \alpha, \lambda, \rho:R^3\rightarrow M,
\eta:R^2\rightarrow M)$ satisfying  certain relations. This 5-tuple
is called a {\it structure} of an Ann-category of  type $(R, M).$
Our purpose is to classify these categories by an appropriate
cohomology group.

First we deal with the manifold of {\it regular} Ann-categories
(they  satisfy $c_{A,A}= id$ for all objects $A$) which arises from
the ring extension problem. In \cite{Q2}, these categories were
classified by the cohomology group $H^{3}_{Sh}$ of the ring $R$
(regarded as an $\Z$-algebra) in the sense of Shukla \cite{Sh}(that
we misleadingly call Mac Lane-Shukla). This result shows the
relation between the notion of regular Ann-category and
the theory of Shukla cohomology. 
Note that  the structure $(\xi,\eta,\alpha,\lambda,\rho)$ of a
regular Ann-category has an extra condition $\eta(x,x)=0$ for the
symmetry constraint. This condition is similar to the requirement
$f\left(\begin{array}{cccc}
0&x\\
x&0\end{array}\right)=0$ so that a 3-cocycle $f$ of Mac Lane
cohomology has a realization \cite{ML}.

In 2007, Jibladze and  Pirashvili \cite{JP} introduced  the notion
of {\it categorical ring} as a slightly modified
 version of the notion of  Ann-category and classified categorical rings
  by the cohomology group $H^{3}_{MaL}(R,M).$ The
 condition $(Ann-1)$ and the compatibility of $\otimes$ with associativity and commutativity constraints with respect to $\oplus$
  are replaced by the compatibility of $\otimes$ with the ``associativity - commutativity'' constraint.
We prove in \cite{QHT} that
   the manifold of all Ann-categories is a subset of the manifold of all categorical
   rings. We also show that there exists a serious gap in the proof of Proposition   2.3 \cite{JP}.
 The authors of \cite{JP} did not prove the existence of the isomorphisms
  $$A\otimes 0\ri 0,\quad 0\otimes A\ri 0,$$
so that the distributivity constraints induce the $\otimes$-functors
which are compatible with the unit constraints. Thus, it can not be
deduced the $\pi_0\mathcal A$-bimodule structure of the abelian
group $\pi_1\mathcal A$ from axioms of a categorical ring, and
therefore results on cohomological classification of categorical
rings can not be stated precisely. In the appendix, we give an
example of a  {\it categorical ring} which is not an {\it
Ann-category}, and prove that the classification theorem in
\cite{JP} is wrong.





A main result of this paper is the cohomological classification
theorem for Ann-categories (Theorem 12) in the general case. It is
not only a continuation of the results in \cite{Q2} and in
\cite{QH1}, but it also gives a new interpretation of
low-dimensional Mac Lane cohomology groups.

After this introductory Section 1,  Section 2 is devoted to
 recalling  some
well-known results: i) the construction of an Ann-category of type
$(R,M)$ which is the reduced Ann-category of an arbitrary one and
the determination of a {\it structure} on such an Ann-category of
type $(R,M)$; ii) the Mac Lane cohomology and the obstruction theory
of Ann-functors. In section 3 we prove that there is a bijection
$$ \mathrm{Struct}[R,M]\leftrightarrow H^{3}_{MacL}(R,M)$$
between the set of cohomology classes of structures on $(R,M)$ and
the Mac Lane cohomology group
 of the ring $R$ with coefficients in the $R$-bimodule $M,$ and therefore we obtain the precise theorem on
 classification of  Ann-categories and Ann-functors. 

  For short, we sometimes
 write $AB$ or $A.B$ instead of $A\tx B.$


\section {Ann-categories of  type
$(R,M)$}

Let us recall some necessary  concepts and facts in this section
from \cite{Q1, Q2}.

A monoidal category is called a {\it Gr-category} (or a {\it
categorical group}) if every object is invertible and the background
category is a groupoid. A {\it Picard } category (or a {\it
symmetric} categorical group) is a Gr-category equipped with a
symmetry constraint which is compatible with associativity
constraint.

\subsection{Ann-categories and Ann-functors}
\noindent{\bf Definition.} An {\it Ann-category} consists of

\noindent $\mathrm{i)}$ a category  $\A$ together with two
bifunctors $\ts, \tx :\A \times \A \rightarrow \A$;

\noindent $\mathrm{ii)}$ a fixed object $0 \in \A$ together with
natural isomorphisms  ${\bf a_+, c,g,d}$ such that $(\A, \ts,{\bf
a_+,c},(0,{\bf g,d}))$ is a Picard category;

\noindent $\mathrm{iii)}$ a fixed object $1 \in \A$ together with
natural isomorphisms  ${\bf a,l,r}$ such that $(\A,\tx,{\bf
a},(1,{\bf l,r}))$ is a monoidal category;

\noindent $\mathrm{iv)}$ natural isomorphisms $\Lh,\Rh$ given by
\[\begin{array}{ccccc}
\Lh_{A,X,Y}: & A\tx (X\ts Y) &\longrightarrow & (A\tx X)\ts (A\tx Y),\\
\Rh_{X,Y,A}: & (X\ts Y)\tx A &\longrightarrow & (X\tx A)\ts (Y\tx A)
\end{array}\]
such that the following conditions hold:

\noindent (Ann - 1) for  $A\in \A$, the pairs $(L^A, \Breve L^A),
(R^A, \Breve R^A)$ defined by
\[\begin{array}{lclclclcc}
L^A & = & A\tx - & \qquad\qquad & R^A & = & -\tx A \\
\breve L^A_{X,Y} & = & \Lh_{A,X,Y} & \qquad\qquad & \breve R^A_{X,Y}
& = & \Rh_{X,Y,A}
\end{array}\]
are $\ts$-functors which are compatible with ${\bf a_+}$ and ${\bf
c}$;

\noindent (Ann - 2) for all $A,B,X,Y \in \A,$ the following diagrams
commute  {\scriptsize
\[\begin{diagram}
\node{(AB)(X\ts Y)}\arrow{s,l}{\Breve L^{AB}}\node{A(B(X\ts
Y))}\arrow{w,t} {{\bf a}_{A,B,X\ts Y}}\arrow{e,t}{id_A\tx \Breve
L^B}\node{A(BX \ts BY)}\arrow{s,r}
{\Breve L^A}\\
\node{(AB)X\ts (AB)Y}\node[2]{A(BX)\ts A(BY)}\arrow[2]{w,t}{{\bf
a}_{A,B,X}\ts {\bf a}_{A,B,Y}}
\end{diagram}\]

\[\begin{diagram}
\node{(X\ts Y)(BA)}\arrow{s,l}{\Breve R^{BA}}\arrow{e,t}{{\bf
a}_{X\ts Y,B,A}} \node{((X\ts Y)B)A}\arrow{e,t}{\Breve R^B\tx id_A}
\node{(XB \ts YB)A}\arrow{s,r}{\Breve R^A}\\
\node{X(BA)\ts Y(BA)}\arrow[2]{e,t}{{\bf a}_{X,B,A}\ts {\bf
a}_{Y,B,A}}\node[2]{(XB)A \ts (YB)A}
\end{diagram}\]
\[\begin{diagram}
\node{(A(X\ts Y)B}\arrow{s,l}{\Breve L^A \tx id_B} \node{A((X\ts
Y)B)}\arrow{w,t}{{\bf a}_{A,X\ts Y,B}}\arrow{e,t}{id_A\tx \breve
R^B}
\node{A(XB \ts YB)}\arrow{s,r}{\Breve L^A}\\
\node{(AX\ts AY)B}\arrow{e,t}{\breve R^B}\node{(AX)B \ts (AY)B}
\node{A(XB) \ts A(YB)}\arrow{w,t}{{\bf a}\ts {\bf a}}
\end{diagram}\]
\[\begin{diagram}
\node{(A\ts B)X\ts (A\ts B)Y}\arrow{s,l}{\Breve R^X \ts \breve R^Y}
\node{(A\ts B)(X\ts Y)}\arrow{w,t}{\breve L^{A\ts
B}}\arrow{e,t}{\breve R^{X\ts Y}}
\node{A(X \ts Y)\ts B(X\ts Y)}\arrow{s,r}{\Breve L^A \ts\Breve L^B }\\
\node{(AX\ts BX)\ts (AY\ts BY)}\arrow[2]{e,t}{{\bf v}}\node[2]{(AX
\ts AY)\ts (BX \ts BY)}
\end{diagram}\]}
where ${\bf v} = {\bf v}_{U,V,Z,T}:(U\ts V)\ts (Z\ts T)
\longrightarrow (U\ts Z)\ts (V\ts T)$ is a unique  morphism
constructed from $\oplus, {\bf a_+, c}, id$ of the symmetric
monoidal category $(\A,\ts)$;

\noindent (Ann - 3) for the unit $1 \in \A$ of the operation $\tx$,
the following diagrams commute {\scriptsize
\[\begin{diagram}
\node{1(X\ts Y)} \arrow[2]{e,t}{\breve L^{1}}\arrow{se,b}{{\bf
l}_{X\ts Y}} \node[2]{1 X\ts 1 Y}\arrow{sw,b}{{\bf l}_X\ts {\bf
l}_Y} \node{(X\ts Y)1}\arrow[2]{e,t}{\breve R^{1}}\arrow{se,b}{{\bf
r}_{X\ts Y}}
\node[2]{X1 \ts Y1}\arrow{sw,b}{{\bf r}_X\ts {\bf r}_Y}\\
\node[2]{X\ts Y} \node[3]{X\ts Y.}
\end{diagram}\]}

Since each of pairs  $(L^A, \hat L^A)$, $(R^A, \hat R^A)$ is an
$\oplus$-functor which is compatible with the associativity
constraint in the Picard category  $\A$, it is also compatible with
the unit constraint $(0,{\bf g,d}),$ so we obtain the following
result.

\begin{lemma}
In an Ann-category $\mathcal A$ there exist uniquely isomorphisms
\[\widehat{L}^A\;:\;A\otimes 0\longrightarrow 0\;,\widehat{R}^A\;:\;0\otimes A\longrightarrow 0\] such that
the following diagrams commute {\scriptsize
\[\begin{diagram}
\node{AX}\node{A(0\ts X)}\arrow{w,t}{L^A({\bf g})}\arrow{s,r}{\breve L^A}
\node{AX}\node{A(X\ts 0)}\arrow{w,t}{L^A({\bf d})}\arrow{s,r}{\breve L^A}\\
\node{0\ts AX}\arrow{n,l}{{\bf g}}\node{A0\ts AX}\arrow{w,t}{\hat L^A\ts id}
\node{AX\ts 0}\arrow{n,l}{{\bf d}}\node{AX\ts A0}\arrow{w,t}{id\ts \hat L^A}
\end{diagram}\]
\[\begin{diagram}
\node{XA}\node{(0\ts X)A}\arrow{w,t}{R^A({\bf g})}\arrow{s,r}{\breve R^A}
\node{XA}\node{(X\ts 0)A}\arrow{w,t}{R^A({\bf d})}\arrow{s,r}{\breve R^A}\\
\node{0\ts XA}\arrow{n,l}{{\bf g}}\node{0A\ts XA}\arrow{w,t}{\hat
R^A\ts id} \node{XA\ts 0}\arrow{n,l}{{\bf d}}\node{XA\ts
0A.}\arrow{w,t}{id\ts \hat R^A}
\end{diagram}\]}
\end{lemma}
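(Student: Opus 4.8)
The plan is to deduce the lemma from the general coherence fact for functors between Gr-categories recalled just above the statement: an $\ts$-functor between Picard categories whose tensor structure is compatible with the associativity constraint ${\bf a_+}$ carries a uniquely determined isomorphism between the images of the unit objects that is compatible with the unit constraints ${\bf g},{\bf d}$. By (Ann-1) the pairs $(L^A,\Breve L^A)$ and $(R^A,\Breve R^A)$, with $L^A=A\tx-$ and $R^A=-\tx A$, are exactly such $\ts$-functors (indeed compatible with both ${\bf a_+}$ and ${\bf c}$). Applying the fact to $L^A$ yields the required $\widehat L^A\colon A\tx0=L^A(0)\to0$, and applying it to $R^A$ yields $\widehat R^A\colon 0\tx A=R^A(0)\to0$; the four displayed diagrams are then precisely the left- and right-unit compatibility conditions for these two functors.

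To keep the argument self-contained I would exhibit the construction for $L^A$, the case of $R^A$ being identical with the two variables interchanged. The first displayed diagram demands that
\[
{\bf g}_{AX}\circ(\widehat L^A\ts id_{AX})\circ\Breve L^A_{0,X}\;=\;L^A({\bf g}_X).
\]
Specialising $X=0$ turns this into $\widehat L^A\ts id_{A0}=\phi$, where $\phi={\bf g}_{A0}^{-1}\circ L^A({\bf g}_0)\circ(\Breve L^A_{0,0})^{-1}$ is an explicit isomorphism $A0\ts A0\to0\ts A0$. Since every arrow of $\A$ is invertible, each object of $(\A,\ts)$ has a $\ts$-inverse, so $-\ts A0$ is an equivalence of categories and hence induces a bijection $\A(A0,0)\to\A(A0\ts A0,\,0\ts A0)$. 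Consequently the equation $\widehat L^A\ts id_{A0}=\phi$ has exactly one solution $\widehat L^A$, which gives both existence and uniqueness at once. Naturality of $\Breve L^A$ and ${\bf g}$ in $X$ then upgrades the identity at $X=0$ to the full first diagram, and compatibility of $L^A$ with the commutativity constraint ${\bf c}$—together with the standard relation between ${\bf g}$, ${\bf d}$ and ${\bf c}$ in the symmetric category $(\A,\ts)$—forces the same $\widehat L^A$ to satisfy the right-unit diagram as well.

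The step I expect to require the most care is propagating the single identity at $X=0$ to arbitrary $X$ while keeping the left- and right-unit conditions simultaneously consistent through ${\bf c}$; this is the part where the hexagon for $\Breve L^A$ and the coherence of $(\A,\ts)$ must be combined. All of this, however, is exactly the content of the cited coherence result for $\ts$-functors between Picard categories, so in the final write-up I would invoke that result and restrict the explicit verification to matching the four diagrams with its conclusion for $L^A$ and $R^A$.
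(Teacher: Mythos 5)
Your proposal follows exactly the paper's own route: the paper's entire proof is the sentence preceding the lemma, namely that by (Ann-1) each pair $(L^A,\breve L^A)$, $(R^A,\breve R^A)$ is an $\oplus$-functor compatible with the associativity constraint of the Picard category $(\A,\oplus)$, and such a functor between Gr-categories is automatically compatible with the unit constraint $(0,{\bf g},{\bf d})$ via a unique isomorphism on the image of $0$ --- the same standard fact you invoke, the four displayed diagrams being precisely its unit-compatibility conditions for $L^A$ and $R^A$. One correction to your supplementary sketch: naturality of $\breve L^A$ and ${\bf g}$ alone cannot propagate the identity from $X=0$ to arbitrary $X$ (there need be no morphism $0\to X$ in $\A$, since $\pi_0\A$ is generally nontrivial); that propagation requires the compatibility of $\breve L^A$ with ${\bf a_+}$ applied to triples such as $(0,0,X)$ together with the unit coherence of $(\A,\oplus)$ --- which is what you concede at the end, so your declared fallback of citing the coherence result for $\oplus$-functors keeps the proof intact and in line with the paper.
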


It is easy to see that if $(F,\Fa, \widehat{F} ):(\A,\oplus)\ri
(\A',\oplus)$ is a monoidal functor between two Gr-categories, then
the canonical isomorphism $\widehat{F}:F0\ri 0'$ can be deduced from
the others. Thus, we state the following definition.

\noindent{\bf Definition.} Let $\A$ and $\A'$ be Ann-categories. An
{\it Ann-functor} $(F,\breve F, \Fn, F_\ast):\A\ri \A'$ consists of
a functor $F:\A\rightarrow \A'$, natural isomorphisms
$$\breve F_{X,Y}:F(X\oplus Y)\rightarrow F(X)\oplus F(Y),\ \widetilde{F} _{X,Y}:F(X\otimes
 Y)\rightarrow F(X)\otimes F(Y),$$
 and an isomorphism $F_\ast:F(1)\rightarrow 1'$ such that $(F,\breve F)$ is a symmetric monoidal functor
with respect to  the operation  $\ts$, $(F,\Fn, F_\ast)$ is a
monoidal functor with respect to  the operation $\tx$, and
$(F,\breve F,\Fn)$ satisfies two following commutative diagrams
{\scriptsize
\[\begin{diagram}
\node{F(X(Y\ts Z))}\arrow{s,l}{F(\Lh)}
\arrow{e,t}{\Fn}\node{FX.F(Y\ts Z)}\arrow{e,t}{id
\tx\Fn}\node{FX(FY\ts FZ)}
\arrow{s,r}{\Lh'}\\
\node{F(XY\ts XZ)}\arrow{e,t}{\breve F}\node{F(XY)\ts F(XZ)}\arrow{e,t}{\Fn\ts\Fn}\node{FX.FY\ts FX.FZ}
\end{diagram}\]
\[\begin{diagram}
\node{F((X\ts Y)Z)}\arrow{s,l}{F(\Rh)}\arrow{e,t}{\Fn}\node{F(X\ts Y).FZ}\arrow{e,t}{\breve F\tx id}\node{(FX\ts FY)FZ}\arrow{s,r}{\Rh'}\\
\node{F(XZ\ts YZ)}\arrow{e,t}{\breve F}\node{F(XZ)\ts F(YZ)} \arrow{e,t}{\Fn\ts\Fn}\node{FX.FZ\ts FY.FZ.}
\end{diagram}\]}

These diagrams are  called the {\it compatibility} of the functor
$F$ with the distributivity constraints.

\indent An {\it Ann-morphism} (or a {\it homotopy})
$$\theta : (F, \Fa, \widetilde{F}, F_{\ast})\rightarrow (F', \breve{F'}, \widetilde{F}', F'_{\ast})$$
between Ann-functors is an  $\oplus$-morphism, as well as an
$\otimes$-morphism.

 If there exists an Ann-functor $(F',
\breve{F'}, \widetilde{F}', F'_{\ast}):\A'\rightarrow \A$ and
Ann-morphisms $F'F\stackrel{\sim}{\rightarrow} id_{\A}, \
FF'\stackrel{\sim}{\rightarrow} id_{\A'}$, we say that
$(F,\Fa,\widetilde{F}, F_{\ast})$ is an {\it Ann-equivalence}, and
$\A$, $\A'$ are {\it Ann-equivalent}.

 It can be proved that each
Ann-functor is an Ann-equivalence if and only if $F$ is a
categorical equivalence.

\begin{lemma}\label{lem31}
Any Ann-functor $F=(F, \Fa, \widetilde{F}, F_{\ast}): \A\rightarrow
\A'$ is homotopic to an Ann-functor $F'=(F', \Fa', \widetilde{F}',
F_{\ast}'),$ where $F'{0}={0}', \widehat{F}'0=id_{{ 0}'},$ and
$F'{1}={1}', F'_{\ast}=id_{{ 1}'}$.
\end{lemma}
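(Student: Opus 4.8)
The plan is to rectify $F$ by a transport of structure along a natural isomorphism that is the identity at every object except the two distinguished ones. Concretely, I would keep $F'$ equal to $F$ on all objects $A\neq 0,1$ and set $F'(0)=0'$, $F'(1)=1'$. To link the two functors I would introduce the natural isomorphism $u:F\Rightarrow F'$ with components $u_A=\mathrm{id}_{F(A)}$ for $A\neq 0,1$, together with $u_0=\widehat F:F(0)\to 0'$ (the canonical $\oplus$-unit isomorphism, which by the remark preceding the Ann-functor definition is already determined by $\breve F$) and $u_1=F_\ast:F(1)\to 1'$. The action of $F'$ on a morphism $f:A\to B$ is then forced by naturality, $F'(f)=u_B\circ F(f)\circ u_A^{-1}$, so that $u$ is an isomorphism of underlying functors by construction.

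Next I would carry the whole Ann-functor structure across $u$, defining $\breve F'$, $\widetilde F'$ and $F'_\ast$ as the unique arrows making the evident squares with the components of $u$ commute, namely
$$\breve F'_{X,Y}=(u_X\ts u_Y)\circ \breve F_{X,Y}\circ u_{X\ts Y}^{-1},\qquad \widetilde F'_{X,Y}=(u_X\tx u_Y)\circ\widetilde F_{X,Y}\circ u_{X\tx Y}^{-1},$$
and $F'_\ast\circ u_1=F_\ast$. With these definitions $u$ becomes, tautologically, both an $\ts$-morphism and a $\tx$-morphism, hence an Ann-morphism; thus $F'$ will be homotopic to $F$ as soon as $F'$ is shown to be an Ann-functor.

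The substantive step is to verify that $(F',\breve F',\widetilde F',F'_\ast)$ really satisfies the Ann-functor axioms. Here I would invoke the general principle that every coherence diagram transports along a natural isomorphism: each axiom for $F'$ — the symmetric $\ts$-monoidal axioms, the $\tx$-monoidal axioms, and the two distributivity-compatibility squares of the definition — is obtained from the corresponding axiom for $F$ by pre- and post-composing the relevant paths with components of $u$ and their inverses, and these extra arrows cancel in pairs because $u$ is natural with respect to $\ts$, $\tx$ and to the constraints ${\bf a_+}, {\bf c}, {\bf a}, \Lh, \Rh$. Hence no new condition arises. I expect this bookkeeping to be the main obstacle, not because any single diagram is difficult but because the distributivity squares engage all four structural data at once, so one must check that the insertions of $u_0,u_1$ at the degenerate vertices are exactly those prescribed above.

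Finally I would read off the normalization. Naturality of $u$ at $1$ gives $F'_\ast\circ u_1=F_\ast$, and since $u_1=F_\ast$ this forces $F'_\ast=\mathrm{id}_{1'}$. For the $\ts$-unit, the canonical isomorphism $\widehat F'$ of the $\ts$-monoidal functor $F'$ is again deduced from $\breve F'$, and the same square at $0$, now with $u_0=\widehat F$, yields $\widehat F'\circ u_0=\widehat F$, whence $\widehat F'=\mathrm{id}_{0'}$. Together with $F'(0)=0'$ and $F'(1)=1'$ this is precisely the asserted normalization. (The two prescriptions for $u$ can only clash when $0=1$ in $\A$, in which case one needs $0'=1'$ in $\A'$ and the agreement of $u_0$ with $u_1$, which follows from the compatibility of $F_\ast$ with the unit constraints; otherwise the definitions never collide.)
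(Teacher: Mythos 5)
Your proposal is correct and follows essentially the same route as the paper: the paper's proof defines exactly your natural isomorphism (there called $\theta$, with $\theta_X=\mathrm{id}_{FX}$ for $X\neq 0,1$, $\theta_0=\widehat F$, $\theta_1=F_\ast$) and transports the structure by the same formulas $F'(f)=\theta_Y F(f)\theta_X^{-1}$, $\breve F'=(\theta\oplus\theta)\breve F\theta^{-1}$, $\widetilde F'=(\theta\otimes\theta)\widetilde F\theta^{-1}$, yielding $\widehat{F'}=\mathrm{id}_{0'}$ and $F'_\ast=\mathrm{id}_{1'}$. Your explicit appeal to the transport-of-coherence principle (and the remark about the degenerate case $0=1$) only spells out what the paper leaves implicit when it asserts that $F'$ is constructed uniquely so that $\theta$ becomes a homotopy.
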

\begin{proof}
Consider a family of isomorphisms in $\A'$:
$$\theta_X=\begin{cases}\begin{aligned}id_{FX} &\ \text{if}\ X\not= { 0},\;X\not= { 1},\\
 \widehat{F}\;\;\;\;\;&\ \text{if}\ X={ 0}, \\ F_{\ast}\;\;\;\;\;&\ \text{if}\ X={ 1},\end{aligned}\end{cases}$$
for $X\in \A$. Then, 
the Ann-functor
 $F'$ can be constructed in a unique way such that $\theta: F\rightarrow F'$ becomes a homotopy. Namely,
\begin{eqnarray*}
F'X&=&\begin{cases} \begin{aligned}FX &\ \text{if}\ X\not= 0, X\not= 1,\\ 0'\;\;\;& \ \text{if}\  X=0,
 \\ 1'\;\;\;& \ \text{if}\  X=1,\end{aligned}\end{cases}\\
 F'(f: X\rightarrow Y)&=&\theta_Y F(f)(\theta_X)^{-1}: F'X\rightarrow F'Y,\\
\Fa'_{X,Y}&=&(\theta_X\oplus \theta_Y)\Fa_{X,Y}\theta^{-1}_{X\oplus Y},\\ \widetilde{F'}_{X,Y}&=&(\theta_X\otimes\theta_Y)\widetilde{F}_{X,Y}\theta^{-1}_{XY},\\
 \widehat{F'}&=& \widehat{F}\theta^{-1}_0=id_{0'},\ F_{\ast}'= F_{\ast}\theta^{-1}_1=id_{1'}.
\end{eqnarray*}
\
\end{proof}
Based on Lemma \ref{lem31}, we refer to $(F, \Fa, \widetilde{F})$ as
an Ann-functor.
\subsection{Reduced Ann-categories}

For an Ann-category $\A,$ the set $R=\pi_0\mathcal A$ of isomorphism
classes of the objects in $\A$ is a ring where the operations
$+,\times$ are induced by $\ts,\tx$ on $\mathcal A,$ and
$M=\pi_1\mathcal A = \Aut(0)$ is an abelian group where the
operation,  denoted by +, is just the composition. Moreover,
$M=\pi_1\mathcal A$ is an $R$-bimodule with the actions
\begin{align}
sa=\lambda_X(a), \quad as=\rho_X(a),\nonumber
\end{align}
where $X\in s, s\in\pi_0\A, a\in\pi_1\A$ and $\lambda_X,\rho_X$
satisfy the commutative diagrams
\[
\begin{diagram}
\node{X.0}\arrow{e,t}{\hat L^X}\arrow{s,l}{id\tx
a}\node{0}\arrow{s,r}{\lambda_X(a)} \node[2]{0.X}\arrow{e,t}{\hat
R^X}\arrow{s,l}{a\tx id}\node{0}\arrow{s,r}{\rho_X(a)}
\\
\node{X.0}\arrow{e,t}{\hat L^X}\node{0,}
\node[2]{0.X}\arrow{e,t}{\hat R^X}\node{0.}
\end{diagram}
\]

We recall briefly
some main facts of the construction of the reduced
Ann-category $S_{\A}$
 of   $\A$ via the structure transport  (for details, see
\cite{Q2}).
The objects of $S_{\A}$ are the elements of the ring
$\pi_0\A$. A morphism is an
 automorphism $(s,a): s\rightarrow s,\ s\in \pi_0\A, a\in
\pi_1\A$. The composition of morphisms is given by
\[(s,a)\circ (s,b)=(s,a+b).\]

For each  $ s\in \pi_0\A$, choose an object $X_s\in\mathcal A$ such
that $X_0=0, X_1=1$,
  and choose an isomorphism $i_X: X\rightarrow X_s$ such that $i_{X_s}=id_{X_s}$.
We obtain two functors \begin{equation}\label{gh}
\begin{cases}
G:\mathcal A\ri S_{\mathcal A}\\
G(X)=[ X]=s\\
G(X \stackrel {f}{\ri}Y)=(s,\gamma_{X_s}^{-1}(i_Yfi_X^{-1})),
\end{cases}\qquad\qquad
\begin{cases}
H: S_{\mathcal A}\ri  \mathcal A\\
H(s)=X_s\\
H(s,u)=\gamma_{X_s}(u).
\end{cases}
\end{equation}
\noindent  where $X,Y\in s$ and $f: X\rightarrow Y$, and $\gamma_X$
is a map defined by the following commutative diagram
\[
\begin{diagram}\label{d9}
\node{X}\arrow{e,t}{\gamma_X(a)}
\node{X}\\
\node{0\ts X}\arrow{n,l}{{\bf g}_{_{X}}}\arrow{e,t}{a\ts id}
\node{0\ts X}\arrow{n,r}{{\bf g}_X}
\end{diagram}\]
\centerline{\scriptsize Diagram 1} \vspace{0.1pt}

\noindent The operations on $S_{\A}$ are defined by
\begin{eqnarray*}
s\ts t&=&G(H(s)\ts H(t))=s+t,\\
(s,a)\ts (t,b)&=&G(H(s,a)\ts H(t,b))=(s+t,a+b),\\
s\tx t&=& G(H(s)\tx H(t))=st,\\
(s,a)\tx (t,b)&=&G(H(s,a)\tx H(t,b))=(st, sb+at),
\end{eqnarray*}
where $s,t\in \pi_0\A$, $a,b\in \pi_1\A$. Obviously, these
operations do not depend on the choice of the set of representatives
$(X_s, i_X).$

The constraints in $S_{\A}$ are defined by those in $\A$ by means of
the notion of {\it stick}. A {\it stick} in $\A$ is a set of
representatives $(X_s, i_X)$ such that
\begin{eqnarray*}
i_{0\oplus X_t}={\bf g}_{X_t},\qquad  i_{X_s\oplus 0}={\bf d}_{X_s},&\\
i_{1\otimes X_t}={\bf l}_{X_t}, \qquad i_{X_s\otimes 1}={\bf r}_{X_s},& i_{0\otimes X_t}=\widehat{R}^{X_t},\quad i_{X_s\otimes 0}=\Lo^{X_s}.
\end{eqnarray*}

 The unit constraints for two operations $\ts,\tx$ in $S_{\A}$ are $(0, id,id)$ and $(1,id, id)$, respectively.
 The functor $H$ and isomorphisms
\begin{align}
\breve{H}=i^{-1}_{X_s\oplus X_t},\ \widetilde{H}=i^{-1}_{X_s\otimes
X_t}.\label{eq2}
\end{align}
transport the constraints  ${\bf a_+, c, a}, \frak{L}, \frak{R}$ of
$\A$ to those $\xi, \eta, \alpha, \lambda, \rho$ of $S_{\A}$. Then,
the category
$$(S_{\A}, \xi,\eta, (0, id, id), \alpha, (1, id, id), \lambda, \rho)$$
is an Ann-category which is equivalent to $\A$ by the
Ann-equivalence $(H, \breve{H}, \widetilde{H}):S_{\A}\ri \A$.
Besides, the functor $G: \A\rightarrow S_{\A}$ together with
isomorphisms
\begin{align}
\breve{G}_{X,Y}=G(i_X\ts i_Y),\ \widetilde{G}_{X,Y}=G(i_X\tx i_Y)
\label{eq4}
\end{align}
is also an Ann-equivalence.
 We refer to $S_{\A}$ as an Ann-category of {\it type} $(R,M),$ called a
 {\it reduction} of $\mathcal A$. We also call
   $(H, \breve H,\widetilde{H})$ and $(G, \breve G,\widetilde{G})$  {\it canonical
 } Ann-equivalences, the family of constraints $h=(\xi, \eta, \alpha, \lambda, \rho)$ of
$S_{\A}$   a {\it structure} of the Ann-category of type $(R, M)$,
or simply a {\it structure on $(R,M)$}.

The following result follows from the axioms of an Ann-category.
\begin{theorem} [{Theorem 3.1 \cite{Q2}}]\label{dl3}
In the reduced Ann-category $S_{\A} $ of an Ann-category $\mathcal
A,$ the structure $(\xi, \eta, \alpha, \lambda, \rho)$ consists of
functions with values in $\pi_1\mathcal A$ such that for any
$x,y,z,t\in\pi_0\mathcal A$, the following conditions hold:
\begin{eqnarray*}
A_1.&
\xi(y,z,t) - \xi(x+y,z,t) +\xi(x,y+z,t) - \xi(x,y,z+t) + \xi(x,y,z) = 0,\\
A_2.&\xi(x,y,z) - \xi(x,z,y) +\xi(z,x,y) + \eta(x+y,z) - \eta(x,z) - \eta(y,z) = 0,\\
A_3.&\eta(x,y) + \eta(y,x) = 0,\\
A_4.&x\eta(y,z) - \eta(xy,xz) = \lambda(x,y,z) - \lambda(x,z,y),\\
A_5.&\eta(x,y)z - \eta(xz,yz) =  \rho(x,y,z) - \rho(y,x,z),\\
A_6.&x\xi(y,z,t) - \xi(xy,xz,xt) =\lambda(x,z,t) - \lambda(x,y+z,t) + \lambda(x,y,z+t)\\
& - \lambda(x,y,z),\\
A_7.& \xi(x,y,z)t - \xi(xt,yt,zt) = \rho(y,z,t) - \rho(x+y,z,t)+\rho(x,y+z,t)\\
&-\rho(x,y,z),\\
A_8.&\rho(x,y,z+t) - \rho(x,y,z) -\rho(x,y,t) +\lambda(x,z,t)+\lambda(y,z,t) - \lambda(x+y,z,t) =\\ &\xi(xz+xt,yz,yt)+\xi(xz,xt,yz)-\eta(xt,yz)+\xi(xz+yz,xt,yt)-\xi(xz,yz,xt),\\
A_9.&\alpha(x,y,z+t)-\alpha(x,y,z)-\alpha(x,y,t) = x\lambda(y,z,t) + \lambda(x,yz,yt)\\
&-\lambda(xy,z,t),\\
A_{10}.&\alpha(x,y+z,t)-\alpha(x,y,t)-\alpha(x,z,t) = x\rho(y,z,t) - \rho(xy,xz,t)+\\
&\lambda(x,yt,zt) -\lambda(x,y,z)t,\\
A_{11}.&\alpha(x+y,z,t) - \alpha(x,y,t) -\alpha(y,z,t) = -\rho(x,y,z)t - \rho(xz,yz,t)\\
& + \rho(x,y,zt),\\
A_{12}.&x\alpha(y,z,t) - \alpha(xy,z,t) + \alpha(x,yz,t) - \alpha(x,y,zt) + \alpha(x,y,z)t = 0,
\end{eqnarray*}
\noindent Further, these functions  satisfy normalization
conditions:
\begin{eqnarray*}
\xi(0,y,z)&=&\xi(x,0,z) = \xi(x,y,0) = 0,\\
\alpha(1,y,z)&=&\alpha(x,1,z) = \alpha(x,y,1) = 0,\\
\alpha(0,y,z)&=&\alpha(x,0,z) = \alpha(x,y,0) = 0,\\
\lambda(1,y,z)&=&\lambda(0,y,z) = \lambda(x,0,z) = \lambda(x,y,0) = 0,\\
\rho(x,y,1)&=&\rho(0,y,z) = \rho(x,0,z) = \rho(x,y,0) = 0.
\end{eqnarray*}

\end{theorem}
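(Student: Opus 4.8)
The plan is to exploit the fact, already established above, that the reduced category $S_{\A}$ is itself an Ann-category, equivalent to $\A$ via the canonical Ann-equivalence $(H,\breve H,\widetilde H)$. Consequently its transported family of constraints $(\xi,\eta,\alpha,\lambda,\rho)$ automatically satisfies every coherence diagram in the Definition of an Ann-category; the real content of the theorem is to translate each of those diagrams into an explicit additive identity in $M=\pi_1\A$. First I would record the two features of $S_{\A}$ that make this translation mechanical. Since each object is a ring element and every structural morphism has equal source and target (for instance $\mathbf a_+$ goes from $(x+y)+z$ to $x+(y+z)$, which are the same element $x+y+z$ of $R$), each constraint is an automorphism and is therefore identified, through the map $\gamma$ of Diagram 1, with an element of $M$; this produces the five functions $\xi,\eta,\alpha,\lambda,\rho$. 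The second ingredient is the computational dictionary for morphisms in $S_{\A}$, namely
$$(s,a)\circ(s,b)=(s,a+b),\quad (s,a)\ts(t,b)=(s+t,a+b),\quad (s,a)\tx(t,b)=(st,sb+at),$$
which converts categorical composition into addition in $M$ and converts the two bifunctors into the $R$-bimodule action (the maps $\lambda_X,\rho_X$) together with addition.

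With these conventions fixed I would go through the axioms diagram by diagram. The Picard-structure axioms for $(\A,\ts)$ give $A_1$ (the pentagon, i.e. the $3$-cocycle identity for $\xi$), $A_2$ (the hexagon relating $\xi$ and $\eta$), and $A_3$ (the symmetry condition on $\eta$); the monoidal pentagon for $(\A,\tx)$ yields $A_{12}$. The condition (Ann-1), asserting that $L^A$ and $R^A$ are $\ts$-functors compatible with $\mathbf a_+$ and $\mathbf c$, unwinds into $A_4,A_5$ (compatibility with $\mathbf c$, relating $\lambda$ and $\rho$ to $\eta$) and $A_6,A_7$ (compatibility with $\mathbf a_+$, relating $\lambda$ and $\rho$ to $\xi$). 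The first three diagrams of (Ann-2) produce $A_9,A_{10},A_{11}$, each relating $\alpha$ to $\lambda$ and $\rho$, while the fourth diagram of (Ann-2) produces $A_8$. Finally the normalization conditions follow from (Ann-3) together with the stick conditions $i_{0\oplus X_t}=\mathbf g_{X_t}$, $i_{1\otimes X_t}=\mathbf l_{X_t}$, $i_{0\otimes X_t}=\widehat R^{X_t}$, and so on, which force the unit and absorbing constraints to be identities and hence make each structure function vanish whenever one of its arguments equals $0$ or $1$.

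The routine but delicate part is evaluating each diagram correctly, and the main obstacle is $A_8$, which arises from the fourth (Ann-2) diagram built from the morphism $\mathbf v$. Because $\mathbf v$ is itself an iterated composite of $\mathbf a_+$ and $\mathbf c$ in the symmetric monoidal structure $(\A,\ts)$, expanding it along the dictionary above contributes several further occurrences of $\xi$ and $\eta$, so the right-hand side of $A_8$ is by far the heaviest bookkeeping of the twelve identities. Throughout, the one point demanding genuine care is keeping track of which module action is produced by $L^A$ versus $R^A$, so that left-hand terms such as $x\eta(y,z)$ and right-hand terms such as $\eta(x,y)z$ land on the correct side; once the dictionary is applied consistently to the objects, morphisms, and actions appearing in each diagram, every coherence condition collapses to exactly one of the stated identities $A_1,\dots,A_{12}$ and to the corresponding normalization equation.
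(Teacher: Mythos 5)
Your proposal is correct and takes essentially the same route as the paper, which offers no proof of its own beyond remarking that the result ``follows from the axioms of an Ann-category'' and citing Theorem 3.1 of \cite{Q2}: there, as in your sketch, the constraints of $S_{\A}$ obtained by structure transport are written as morphisms $(s,a)$ with $a\in M$, and each axiom of an Ann-category (Picard pentagon/hexagon/symmetry, $\otimes$-pentagon, (Ann-1), the four (Ann-2) diagrams, and (Ann-3) with the stick conditions) is evaluated through the dictionary $(s,a)\circ(s,b)=(s,a+b)$, $(s,a)\otimes(t,b)=(st,sb+at)$ to yield exactly $A_1$--$A_{12}$ and the normalization conditions. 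Your matching of axioms to identities agrees with the intended correspondence, so the proposal is a faithful reconstruction of the cited proof.
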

The induced operations on $S_{\A} $ do not depend on the choice of
sticks. We now investigate  the effect of different choices of the
stick $(X_s, i_X)$ in the induced constraints on $S_{\A} $.
\begin{proposition}\label{pr4}
Let $\mathcal S$ and $\mathcal S'$ be reduced Ann-categories of
$\mathcal A$ corresponding to the sticks $(X_s,i_X)$ and
$(X'_s,i'_X)$, respectively. Then the structures
$(\xi,\eta,\alpha,\lambda,\rho)$ of $\mathcal S$ and
$(\xi',\eta',\alpha',\lambda',\rho')$ of $\mathcal S'$ satisfy the
following relations:
\begin{eqnarray*}
A_{13}.& \xi(x,y,z) - \xi'(x,y,z)  =  \tau(y,z)- \tau(x+y,z) + \tau(x,y+z) - \tau(x,y),\\
A_{14}.& \eta(x,y) - \eta'(y,x)  =  \tau(x,y) - \tau(y,x),\\
A_{15}.& \alpha(x,y,z) - \alpha'(x,y,z)  =  x\nu(y,z) - \nu(xy,z) + \nu(x,yz) - \nu(x,y)z,\\
A_{16}.& \lambda(x,y,z) - \lambda'(x,y,z)  = \nu(x,y+z) - \nu(x,y) - \nu(x,z) + x\tau(y,z) -\tau(xy,xz),\\
A_{17}.& \rho(x,y,z) - \rho'(x,y,z)  = \nu(x+y,z) - \nu(x,z) - \nu(y,z) + \tau(x,y)z - \tau(xz,yz),
\end{eqnarray*}
where $\tau,\nu:(\pi_0\A)^2\rightarrow\pi_1\A$ are the functions satisfying the normalization conditions $\tau(0,y) = \tau(x,0) = 0$ and $\nu(0,y) = \nu(x,0) = \nu(1,y) = \nu(x,1) = 0$.
\end{proposition}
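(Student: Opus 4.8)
The plan is to exhibit the two structures as the two gauge-equivalent data attached to one and the same Ann-equivalence between $\mathcal S$ and $\mathcal S'$. The crucial observation is that $\mathcal S$ and $\mathcal S'$ share the same underlying category and the same bifunctors $\ts,\tx$ (these depend only on $R=\pi_0\A$ and $M=\pi_1\A$, not on the stick); the two sticks affect only the constraints. Since both are reductions of $\A$, the canonical Ann-equivalences compose to an Ann-equivalence
\[
K=(K,\breve K,\widetilde K)=(G',\breve G',\widetilde G')\circ(H,\breve H,\widetilde H)\colon \mathcal S\ri \mathcal S'.
\]
On objects $K(s)=G'(X_s)=s$, so $K$ is the identity on objects. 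Moreover $K$ is the identity on morphisms: the map $\gamma_X$ of Diagram 1 does not depend on the stick and is natural in $X$, so that $\gamma_{X'_s}(a)=i'_{X_s}\,\gamma_{X_s}(a)\,(i'_{X_s})^{-1}$, whence $G'H$ sends the label $a$ of a morphism back to $a$. Thus the underlying functor of $K$ is the identity functor $\mathcal S\ri\mathcal S'$, and after normalizing by Lemma 2 we may take $\widehat K=id$, $K_\ast=id$. Because $K$ is the identity on objects, each structure isomorphism is an automorphism of its source; writing
\[
\breve K_{x,y}=(x+y,\tau(x,y)),\qquad \widetilde K_{x,y}=(xy,\nu(x,y))
\]
defines the two functions $\tau,\nu\colon(\pi_0\A)^2\ri\pi_1\A$.

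I would then read $A_{13}$--$A_{17}$ directly off the five coherence conditions that make $K$ an Ann-functor. Recall in a reduced category that $(s,a)\circ(s,b)=(s,a+b)$, $(s,a)\ts(t,b)=(s+t,a+b)$, $(s,a)\tx(t,b)=(st,sb+at)$, and that the constraints of $\mathcal S$ (resp.\ $\mathcal S'$) are the automorphisms labelled by $\xi,\eta,\alpha,\lambda,\rho$ (resp.\ $\xi',\eta',\alpha',\lambda',\rho'$). Evaluating each commuting square in the definition of an Ann-functor on objects $x,y,z$ and adding the $M$-labels along the two paths produces a single identity in $\pi_1\A$. The coherence of $(K,\breve K)$ with $\mathbf a_+$ gives $A_{13}$; its coherence with the symmetry $\mathbf c$ gives $A_{14}$; the coherence of $(K,\widetilde K)$ with $\mathbf a$ gives $A_{15}$; and the two distributivity diagrams for $K$, those involving $\Lh$ and $\Rh$, give $A_{16}$ and $A_{17}$ respectively.

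The delicate, though routine, point is the bookkeeping of the module actions. Whenever a structure isomorphism $\breve K$ or $\widetilde K$ is transported by one of the functors $L^X=X\tx-$ or $R^X=-\tx X$ --- which occurs on the outer edges of the $\tx$-associativity and distributivity squares --- its label is acted on by the bimodule structure $x\cdot a=\lambda_X(a)$, $a\cdot x=\rho_X(a)$; this is exactly what yields the terms $x\nu(y,z)$, $\nu(x,y)z$, $x\tau(y,z)$ and $\tau(x,y)z$ appearing in $A_{15}$--$A_{17}$. Care with the orientations is also needed (the antisymmetry $A_3$ of $\eta$ governs the indices in $A_{14}$). Finally, the stated normalizations $\tau(0,y)=\tau(x,0)=0$ and $\nu(0,y)=\nu(x,0)=\nu(1,y)=\nu(x,1)=0$ follow from the compatibility of $K$ with the unit constraints $(0,id,id)$ and $(1,id,id)$, together with the stick conditions $i_{0\ts X}=\mathbf g$, $i_{1\tx X}=\mathbf l$, $i_{X\tx 1}=\mathbf r$, which force $\breve K$ and $\widetilde K$ to be trivial whenever one argument is neutral. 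The main obstacle is precisely this faithful translation of the five coherence squares into additive identities with the module actions and signs kept consistent; once the dictionary is fixed, $A_{13}$--$A_{17}$ fall out.
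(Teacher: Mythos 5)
Your proposal is correct and follows exactly the route the paper's own machinery points to: the paper states Proposition \ref{pr4} without proof (recalling it from \cite{Q2}), and its Lemma \ref{bd5} is precisely the dictionary you invoke, with the composite $K=(G',\breve G',\widetilde G')\circ (H,\breve H,\widetilde H):\mathcal S\rightarrow \mathcal S'$ of canonical Ann-equivalences serving as the identity-type Ann-functor whose isomorphisms $(\breve K,\widetilde K)$ define $(\tau,\nu)$, and whose compatibility diagrams, written in $M$-labels, are the relations $A_{13}$--$A_{17}$. The only caveats are conventional (the orientations chosen for $\mathbf{a_+},\mathbf{c},\mathbf{a},\frak{L},\frak{R}$ fix the signs, and your derivation actually yields $\eta(x,y)-\eta'(x,y)=\tau(x,y)-\tau(y,x)$, which is what $A_{14}$ should read), plus one small imprecision: the normalizations $\nu(0,y)=\nu(x,0)=0$ do not come from the $\otimes$-unit constraints but from compatibility with the isomorphisms $\widehat{L},\widehat{R}$ of Lemma 1 together with the stick conditions $i_{0\otimes X_t}=\widehat{R}^{X_t}$, $i_{X_s\otimes 0}=\widehat{L}^{X_s}$.
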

Two structures $(\xi,\eta,\alpha,\lambda,\rho)$ and
$(\xi',\eta',\alpha',\lambda',\rho')$ of  Ann-categories of  type
$(R, M)$  are \emph{cohomologous} if and only if they satisfy the
relations $A_{13}-A_{17}$ in Proposition \ref{pr4}.

Note that two unit constraints of $\ts$ and $\tx$ in an Ann-category
of  type $(R, M)$ are both strict. It is easy to prove the following
lemma.

\begin{lemma} \label{bd5} Two structures $h$ and $h'$ are cohomologous if and only if there exists
 an Ann-functor
 $(F,\breve F,\Fn):(R,M, h) \rightarrow (R,M, h')$, where $ F=id_{(R,M)}.$
\end{lemma}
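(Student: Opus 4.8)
The plan is to set up an explicit dictionary between pairs of functions $(\tau,\nu)$ realizing the cohomology relations $A_{13}$–$A_{17}$ and Ann-functors whose underlying functor is $\mathrm{id}_{(R,M)}$, and then to check that the axioms defining an Ann-functor translate term-by-term into these relations. Since the notion of ``cohomologous'' was defined (in the remark following Proposition \ref{pr4}) precisely as the existence of functions $\tau,\nu\colon R^2\to M$ with the stated normalizations satisfying $A_{13}$–$A_{17}$, proving the lemma amounts to showing that such a pair $(\tau,\nu)$ carries exactly the same information as an Ann-functor $(\mathrm{id},\breve F,\Fn)$.

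First I would pin down the shape of an Ann-functor $(F,\breve F,\Fn)$ with $F=\mathrm{id}_{(R,M)}$. In an Ann-category of type $(R,M)$ every morphism $s\to s$ is an automorphism of the form $(s,a)$ with $a\in M=\pi_1\A$, and both unit constraints are strict; hence $F$ is the identity on objects and morphisms, $F_\ast=\mathrm{id}$, and the structural isomorphisms are forced to have the form $\breve F_{x,y}=(x+y,\tau(x,y))$ and $\Fn_{x,y}=(xy,\nu(x,y))$ for uniquely determined functions $\tau,\nu\colon R^2\to M$. Because all morphisms involved are automorphisms, each coherence diagram of the definition collapses to an additive identity in the abelian group $M$, with the constraints $\xi,\eta,\alpha,\lambda,\rho$ of the source and $\xi',\eta',\alpha',\lambda',\rho'$ of the target appearing as summands.

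Next I would read off the monoidal conditions. Requiring $(\mathrm{id},\breve F)$ to be a symmetric monoidal functor for $\ts$ gives, from compatibility with ${\bf a_+}$, exactly relation $A_{13}$, and from compatibility with the symmetry ${\bf c}$ exactly $A_{14}$, while compatibility with the strict unit $(0,{\bf g,d})$ yields $\tau(0,y)=\tau(x,0)=0$. Requiring $(\mathrm{id},\Fn,\mathrm{id})$ to be a monoidal functor for $\tx$ gives, from compatibility with ${\bf a}$, exactly $A_{15}$, and the unit compatibilities force $\nu(1,y)=\nu(x,1)=0$; the remaining normalizations $\nu(0,y)=\nu(x,0)=0$ come from naturality at the zero object together with the induced isomorphisms $\widehat L^A,\widehat R^A$ supplied by the first Lemma. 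Finally, the two distributivity-compatibility squares in the definition of an Ann-functor turn into $A_{16}$ (from the $\Lh$-square) and $A_{17}$ (from the $\Rh$-square).

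The main obstacle is this last translation: the two distributivity squares simultaneously involve $\breve F=\tau$, $\Fn=\nu$ and the constraints $\lambda,\rho$, so after substituting the reduced constraints of $S_\A$ one must carefully track the arguments of every $\lambda,\rho,\nu,\tau$ term and the order of composition in order to see that each square reduces precisely to the displayed identity $A_{16}$ (resp.\ $A_{17}$) and to nothing more. Once this bookkeeping is complete, both implications follow at once: extracting $(\tau,\nu)$ from a given Ann-functor shows that an Ann-functor forces $h,h'$ to be cohomologous, while feeding a pair $(\tau,\nu)$ realizing $A_{13}$–$A_{17}$ back into the formulas for $\breve F,\Fn$ produces an Ann-functor. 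Since the assignment $(\tau,\nu)\leftrightarrow(\breve F,\Fn)$ is visibly a bijection, the two notions coincide, which is the assertion of the lemma.
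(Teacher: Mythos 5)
Your proposal is correct and is essentially the paper's own (unwritten) argument: the paper asserts Lemma \ref{bd5} without proof, and the way it is invoked in Lemma \ref{bd8} --- where one sets $\breve F=\tau$ and $\Fn=\nu$ --- confirms that the intended proof is exactly your dictionary between normalized pairs $(\tau,\nu)$ satisfying $A_{13}$--$A_{17}$ and Ann-functors with underlying functor $id_{(R,M)}$, each compatibility diagram collapsing to an additive identity in $M$. The one point worth making explicit is that the normalizations $\tau(0,y)=\tau(x,0)=0$ and $\nu(1,y)=\nu(x,1)=0$ are not literally forced on an arbitrary such Ann-functor but rely on the paper's convention, justified by Lemma \ref{lem31}, that one may take $\widehat F=id$ and $F_\ast=id$; this can always be arranged up to homotopy without changing the identity underlying functor.
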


\subsection{ Mac Lane cohomology groups of rings and obstruction theory}

Let $R$ be a ring and $M$ be an $R$-bimodule. From the definition of
Mac Lane cohomology of rings \cite{ML}, we obtain the description of
 elements in the cohomology group $H^3_{MaL}(R,M).$ 

The group $Z^3_{{MaL}}(R,M)$ of 3-cocycles of $R$ with coefficients
in $M$ consists of the quadruples $(\sigma,\alpha,\lambda,\rho)$ of
the maps:
$$\sigma:R^4\rightarrow M;\ \ \alpha,\lambda,\rho:R^3\rightarrow M
$$
satisfying the following conditions:

 \vspace{5pt}
\noindent $  M_1.\;
x\alpha(y,z,t)-\alpha(xy,z,t)+\alpha(x,yz,t)-\alpha(x,y,zt)+\alpha(x,y,z)t=0,$

\vspace{3pt} \noindent $M_2.\;
-\alpha(x,z,t)+-\alpha(y,z,t)+\alpha(x+y,z,t)+\rho(xz,yz,t)-\rho(x,y,zt)+$\\
\vspace{3pt} $\;\;\;\;\;\;\;\rho(x,y,z)t=0,$

\vspace{3pt} \noindent $M_3.\; -\alpha(x,y,t)-\alpha(x,z,t)+\alpha(x,y+z,t)+x\rho(y,z,t)-\rho(xy,xz,t)-\lambda(x,yt,zt)$\\
\vspace{3pt} $\;\;\;\;\;\;\; +\lambda(x,y,z)t=0,$

\vspace{3pt} \noindent $M_4. \;
\alpha(x,y,z)+\alpha(x,y,t)-\alpha(x,y,z+t)+x\lambda(y,z,t)-\lambda(xy,z,t)+\lambda(x,yz,yt)$\\
\vspace{3pt}$\;\;\;\;\;\;\; =0,$

\vspace{3pt} \noindent $ M_5. \; -\lambda(x,z,t)-\lambda(y,z,t)+\lambda(x+y,z,t)+\rho(x,y,z)+\rho(x,y,t)$\\
\vspace{3pt}$\;\;\;\;\;\;\;-\rho(x,y,z+t)+\sigma(xz,xt,yz,yt)=0,$

\vspace{3pt} \noindent $M_6. \; \lambda(r,x,y)+\lambda(r,z,t)-\lambda(r,x+z,y+t)-\lambda(r,x,z)-\lambda(r,y,t)+
\lambda(r,x+y,z+t)$\\
\vspace{3pt}$\;\;\;\;\;\;\;-r\sigma(x,y,z,t)+\sigma(rx,ry,rz,rt)=0,$

\vspace{3pt} \noindent $M_7. \; -\rho(x,y,r)-\rho(z,t,r)+\rho(x+z,y+t,r)+\rho(x,z,r)+\rho(y,t, r)$\\
\vspace{3pt}$\;\;\;\;\;\;\; -\rho(x+y,z+t,r) -\sigma(xr,yr,zr,tr)+\sigma(x,y,z,t)r=0,$

\vspace{3pt} \noindent $M_8.\; -\sigma(r,s,u,v)-\sigma(x,y,z,t)+\sigma(r+x, s+y, u+z, v+t)$\\
\vspace{3pt}$\;\;\;\;\;\;\;  +\sigma(r,s,x,y)+\sigma(u,v,z,t)-\sigma(r+u, s+v, x+z, y+t)$\\
\vspace{3pt}$\;\;\;\;\;\;\; -\sigma(r,u,x,z)-\sigma(s,v,y,t)+\sigma(r+s,u+v,x+y,z+t)=0.$

\noindent These functions  satisfy normalization
conditions:
\begin{eqnarray*}
\alpha(0,y,z)&=&\alpha(x,0,z)=\alpha(x,y,0)=0,\\
\lambda(0,y,z)&=&\lambda(x,0,z)=\lambda(x,y,0)=0,\\
\rho(0,y,z)&=&\rho(x,0,z)=\rho(x,y,0)=0,\\
\sigma(r,s,0,0)&=&\sigma(0,0,u,v)=\sigma(r,0,u,0)=\sigma(0,s,0,v)=\sigma(r,0,0,v)=0.
\end{eqnarray*}
The 3-cocycle $h=(\sigma,\alpha,\lambda,\rho)$ belongs to the group
$B^3_{{MaL}}(R,M)$ if and only if there exist the functions
$\tau\nu:R^2\rightarrow M$ satisfying:

\vspace{5pt} \noindent $M_9.\;\sigma (x,y,z,t)=\tau(x,y)+\tau(z,t)-\tau(x+z,y+t)-\tau(x,z)-\tau(y,t)$\\
\vspace{3pt}$\;\;\;\;\;\;\; +\tau(x+y,z+t),$

\vspace{3pt} 
\noindent $M_{10}.\;\alpha(x,y,z)=x\nu(y,z)-\nu(xy,z)+\nu (x,yz)-\nu(x,y)z,$

\vspace{3pt} \noindent
$M_{11}.\;\lambda(x,y,z)=\nu(x,y+z)-\nu(x,y)-\nu(x,z)+x\tau(y,z)-\tau(xy,xz),$

\vspace{3pt} \noindent
$M_{12}.\;\rho(x,y,z)=\nu(x+y,z)-\nu(x,z)-\nu(y,z)+\tau(x,y)z-\tau(xz,yz),$

\vspace{3pt}\noindent where $\tau,\nu$ satisfy the normalization
conditions: $\tau(0,y) = \tau(x,0) = 0$ and $\nu(0,y) = \nu(x,0) =
\nu(1,y) = \nu(x,1) = 0.$

The group $Z^2_{MaL}(R,M)$ consists of 2-cochains $g=(\tau,\nu)$ of the ring $R$ with coefficients in the $R$-bimodule $M$ satisfying
\[\partial g=0.\]

\noindent The subgroup $B^2_{MaL}(R,M)\subset Z^2_{MaL}(R,M)$ of
2-coboundaries consists of the pairs $(\tau,\nu)$ such that there
exist the maps $t:R\rightarrow M$ satisfying
$(\tau,\nu)=\partial_{MaL}t$, that is,  

\vspace{0.15cm}
\noindent $M_{13}.\ \ \tau(x,y) =  t(y)-t(x+y)+t(x),$

\vspace{3pt}
\noindent $M_{14}.\  \ \nu(x,y)  =  xt(y) - t(xy)+t(x)y,$

\vspace{3pt} \noindent where $t$ satisfies the normalization
condition, $t(0)=t(1)=0$.

\vspace{0.15cm}
The group $Z^1_{MaL}(R,M)$ consists of 1-cochains $t$ of the ring $R$ with coefficients in the $R$-bimodule $M$ satisfying
\[\partial t=0.\]

\noindent The subgroup  of 1-coboundaries, $B^1_{MaL}(R,M)\subset Z^1_{MaL}(R,M)$, consists of the functions $t$ such that there exists $a\in R$ satisfying $t(x)=ax-xa$.

The quotient group
\[H^i_{MaL}(R,M)=Z^i_{MaL}(R,M)/ B^i_{MaL}(R,M),\ i=1,2,3, \]
is called the  $i^{\text{th}}${\it Mac Lane cohomology group} of the ring $R$ with coefficients in the $R$-bimodule $M$.

\vspace{0.2cm}
Let us now recall from \cite{QH1} some results on Ann-functors. 
Each Ann-functor $(F, \Fa,\widetilde{F}):\A\rightarrow \A'$ induces
one $S_F$ between   their reduced Ann-categories. Throughout this
section, let $\mathcal S$ and $\mathcal S'$ be Ann-categories of
types $(R, M,h)$ and $(R',M',h')$, respectively.

 A functor  $F: \mathcal S\rightarrow \mathcal S'$ is called a functor of {\it type} $(p, q)$ if
$$F(x)=p(x),\ \ F(x,a)=(p(x), q(a)), $$
where $p:R\rightarrow R'$ is a ring homomorphism and $q:M\rightarrow
M'$ is a group homomorphism such that
$$q(xa)=p(x)q(a),\quad x\in R, a\in M.$$
The group $M'$ can be regarded as an $R$-module with the action
$sa'=p(s)a'$, so $q$ is an $R$-bimodule homomorphism. In this case,
we say that $(p,q)$ is a {\it pair of homomorphisms} and that the
function
\begin{equation}
k=q_{\ast}h -p^{\ast}h' \label{8}
\end{equation}
 is   an {\it obstruction} of $F$,
where $p^{\ast}, q_{\ast}$ are canonical homomorphisms,
$$Z^{3}_{MacL}(R, M)\stackrel{q_{\ast}}{\longrightarrow}Z^{3}_{MacL}(R, M')
\stackrel{p^{\ast}}{\longleftarrow} Z^{3}_{MacL}(R', M').$$

\begin{proposition}[Proposition 4.3 \cite{QH1}]\label{s61}
Every Ann-functor $F: \mathcal S\rightarrow \mathcal S'$ is a
functor of type $(p, q).$
\end{proposition}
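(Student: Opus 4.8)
The plan is to read $p$ and $q$ directly off the action of $F$ on objects and on $\Aut(0)$, and then to extract the two compatibility relations $F(x,a)=(p(x),q(a))$ and $q(xa)=p(x)q(a)$ from the naturality of $\breve F$ and $\widetilde F$. The single fact that drives everything is that in a reduced Ann-category every morphism is an endomorphism, in fact an automorphism of a single object valued in the abelian group $M$ (respectively $M'$); hence any isomorphism in $\mathcal S'$ forces its source and target to coincide, and the composite of two automorphisms of one object is simply the sum of their $M'$-components. I may also assume $F$ normalised by Lemma~\ref{lem31}, though this is not strictly needed.

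First I would set $p(x)=F(x)$ on objects. The observation above makes $p$ a ring homomorphism at once: the isomorphism $\breve F_{x,y}\colon F(x\oplus y)\to Fx\oplus Fy$ can only exist if $p(x+y)=p(x)+p(y)$, the isomorphism $\widetilde F_{x,y}$ forces $p(xy)=p(x)p(y)$, while $\widehat F\colon F0\to 0'$ and $F_\ast\colon F1\to 1'$ give $p(0)=0'$ and $p(1)=1'$. Next I would define $q\colon M\to M'$ by $F(0,a)=(0',q(a))$, which makes sense since $F$ maps $\Aut(0)$ into $\Aut(F0)=\Aut(0')=M'$; functoriality together with the composition law $(0,a)\circ(0,b)=(0,a+b)$ then yields $q(a+b)=q(a)+q(b)$, so $q$ is a group homomorphism.

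The heart of the matter is identifying $F$ on a general automorphism. I would write $(x,a)=(x,0)\oplus(0,a)$ in the additive structure on morphisms and feed the pair $u=(x,0)\colon x\to x$, $v=(0,a)\colon 0\to 0$ into the naturality square of $\breve F$, that is $\breve F_{x,0}\circ F(u\oplus v)=(Fu\oplus Fv)\circ\breve F_{x,0}$. Here $\breve F_{x,0}$ is some automorphism $(p(x),c)$ of $p(x)$, $F(u\oplus v)=F(x,a)=(p(x),w)$ with $w$ unknown, and $Fu\oplus Fv=(p(x),0)\oplus(0',q(a))=(p(x),q(a))$. Reading both sides as sums of $M'$-components and using that $M'$ is abelian, the unknown $c$ cancels and forces $w=q(a)$, so $F(x,a)=(p(x),q(a))$. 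The same device gives the module law: from the tensor formula $(x,0)\otimes(0,a)=(0,xa)$, the naturality square of $\widetilde F$, namely $\widetilde F_{x,0}\circ F(u\otimes v)=(Fu\otimes Fv)\circ\widetilde F_{x,0}$ with $F(u\otimes v)=F(0,xa)=(0',q(xa))$ and $Fu\otimes Fv=(p(x),0)\otimes(0',q(a))=(0',p(x)q(a))$, yields $q(xa)=p(x)q(a)$ after the automorphism $\widetilde F_{x,0}$ of $0'$ cancels by abelianness; the mirror computation with $(0,a)\otimes(x,0)=(0,ax)$ gives $q(ax)=q(a)p(x)$, so $q$ is in fact an $R$-bimodule map. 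This is precisely the assertion that $F$ has type $(p,q)$.

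I expect the only genuine subtlety, and the step I would phrase most carefully, to be this cancellation. The temptation is to compute the components $c$ of $\breve F_{x,0}$ and of $\widetilde F_{x,0}$ explicitly, which would drag in the Ann-functor coherence diagrams and the unit isomorphism $\widehat F$; the point is instead that these components lie in the abelian groups $M'$ and simply drop out of both naturality identities. Everything else is a direct transcription of the definitions of $p$, $q$, and of the operations on morphisms in a reduced Ann-category.
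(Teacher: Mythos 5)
This proposition is stated in the paper without proof: it is imported verbatim as Proposition 4.3 of \cite{QH1}, so there is no in-paper argument to compare yours against. Judged on its own terms, your proof is correct and is the natural one: since every morphism of $\mathcal S$ and $\mathcal S'$ is an endomorphism $(s,a)\colon s\to s$, the mere existence of $\breve F_{x,y}$, $\widetilde F_{x,y}$, $\widehat F$ and $F_\ast$ forces $p=F$ to be a ring homomorphism on objects; functoriality on $\Aut(0)$ gives the additive map $q$; and the naturality squares of $\breve F$ and $\widetilde F$ evaluated at the pair $\bigl((x,0),(0,a)\bigr)$ give $F(x,a)=(p(x),q(a))$ and $q(xa)=p(x)q(a)$ once the unknown components of $\breve F_{x,0}$ and $\widetilde F_{x,0}$ cancel in the abelian group $M'$ — exactly the cancellation you single out, which is indeed what lets one avoid the coherence diagrams entirely.

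One step should be made explicit, because as written it could look circular: you use $Fu=F(x,0)=(p(x),0)$, and this is formally an instance of the conclusion $F(x,a)=(p(x),q(a))$ that you are trying to prove. It is nevertheless free of charge: in the reduced category the composition law $(x,a)\circ(x,b)=(x,a+b)$ shows that $(x,0)$ is precisely $id_x$, so $F(x,0)=id_{p(x)}=(p(x),0)$ by functoriality alone, with no appeal to the statement being proved. With that remark inserted, every step is justified; you even obtain the right-module identity $q(ax)=q(a)p(x)$ from the mirror computation, slightly more than the paper's definition of a functor of type $(p,q)$ demands.
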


Keeping in mind  that $\gamma$ is the map defined by Diagram 1,we
state the following proposition.
\begin{proposition}[Proposition 4.1 \cite{QH1}]\label{md22}
Let $\A$ and $\A'$ be Ann-categories. Then every Ann-functor
$(F,\Fa,\widetilde{F}):\A\rightarrow \A'$ induces an Ann-functor
$S_F:S_{\A}\rightarrow S_{\A'}$ of  type $(p, q),$ where
$$p=F_0:\pi_0\A\to \pi_0\A',\ [X] \mapsto [FX],$$
$$q=F_1:\pi_1\A\to\pi_1\A',\ u \mapsto \gamma_{F0}^{-1}(Fu).$$
 Further,

\emph {i)} $F$ is an equivalence if and only if $F_0, F_1$ are
isomorphisms,

\emph {ii)} the Ann-functor  $S_F$ satisfies the commutative diagram
\[
\begin{diagram}
\node{\A}\arrow{e,t}{F}
\node{\A'}\arrow{s,r}{G'}\\
\node{S_{\A}}\arrow{n,l}{H}\arrow{e,t}{S_F} \node{S_{\A',}}
\end{diagram}\]
where $H, G'$ are canonical Ann-equivalences defined by
$\mathrm{(\ref{gh}), (\ref{eq2}), (\ref{eq4})}$.

\end{proposition}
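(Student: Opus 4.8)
The plan is to realize $S_F$ as the composite of Ann-functors
$$S_F=G'\circ F\circ H:\ S_{\A}\xrightarrow{\,H\,}\A\xrightarrow{\,F\,}\A'\xrightarrow{\,G'\,}S_{\A'},$$
where $(H,\breve H,\widetilde H)$ and $(G',\breve{G}',\widetilde{G}')$ are the canonical Ann-equivalences attached to the reductions $S_{\A}$ and $S_{\A'}$. Since a composite of Ann-functors is again an Ann-functor, with structure isomorphisms the evident composites, $S_F$ is an Ann-functor and the square in part (ii) commutes by this very definition. On objects, $H(s)=X_s$ and $G'$ sends an object to its isomorphism class, so $S_F(s)=[FX_s]$; as $F$ preserves isomorphisms this depends only on $s$ and equals $p(s)=F_0(s)$. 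That $p$ is a ring homomorphism with $p(1)=1'$ is immediate from the monoidality of $F$ for both $\ts$ and $\tx$ and from $F_\ast$.

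The morphism computation is the crux. For $(s,a):s\to s$ we have $H(s,a)=\gamma_{X_s}(a)$, hence $S_F(s,a)=G'\big(F(\gamma_{X_s}(a))\big)$. Using the compatibility of $(F,\breve F)$ with the unit constraint, $F({\bf g}_X)={\bf g}'_{FX}(\widehat F\ts id)\,\breve F_{0,X}$, together with the naturality of $\breve F$ applied to $a\ts id_X$, I would first establish
$$F(\gamma_X(a))=\gamma'_{FX}\big(\widehat F\, Fa\, \widehat F^{-1}\big).$$
Specializing to $X=0$ and using $\gamma_0=id$ — which follows from the naturality of ${\bf d}$ and the coherence ${\bf g}_0={\bf d}_0$ — identifies $\widehat F\,Fa\,\widehat F^{-1}$ with $\gamma_{F0}^{-1}(Fa)=q(a)$. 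Applying $G'$ and cancelling the two copies of $\gamma'$ by naturality of $\gamma'$ along the stick isomorphism $i'_{FX_s}:FX_s\to X'_{p(s)}$ then gives $S_F(s,a)=(p(s),q(a))$, independently of the chosen representative. Finally, the identity $q(xa)=p(x)q(a)$ that makes $q$ an $R$-bimodule homomorphism is read off from the compatibility of $F$ with the distributivity constraint $\Lh$. This shows $S_F$ is of type $(p,q)$.

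For part (i), I would invoke the fact recalled in the text that an Ann-functor is an Ann-equivalence precisely when its underlying functor is a categorical equivalence, so it suffices to analyse $F$ as a functor of Gr-categories. There every hom-set $\A(X,Y)$ is empty unless $[X]=[Y]$ and is otherwise a torsor over $\pi_1\A$ via $\gamma$; consequently $F$ is essentially surjective iff $F_0$ is surjective, and fully faithful iff $F_0$ is injective and $F_1$ is bijective. Combining these equivalences shows that $F$ is a categorical equivalence iff both $F_0$ and $F_1$ are isomorphisms.

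The main obstacle is the morphism computation: tracking how $F$ carries the translation map $\gamma$ through the unit constraints and verifying that the resulting second component is genuinely independent of the stick $(X_s,i_X)$. Once the identity $F(\gamma_X(a))=\gamma'_{FX}(\widehat F\,Fa\,\widehat F^{-1})$ is secured, the remaining claims — that $p$ and $q$ are the asserted homomorphisms and that part (i) holds — follow routinely from the Ann-functor axioms and from the structure of Gr-categories.
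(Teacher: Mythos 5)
The paper itself contains no proof of this proposition: it is quoted as Proposition 4.1 of \cite{QH1}, so there is no in-paper argument to compare yours against, and your proposal must be judged on its own. So judged, it is correct and takes the natural route: defining $S_F=G'\circ F\circ H$ makes part (ii) tautological and gives the Ann-functor structure at once (composites of Ann-functors are Ann-functors); the key identity $F(\gamma_X(a))=\gamma'_{FX}\bigl(\widehat F\,Fa\,\widehat F^{-1}\bigr)$ does follow, exactly as you say, from the compatibility of $(F,\breve F,\widehat F)$ with the unit constraint plus naturality of $\breve F$, and combined with $\gamma_0=id$ and the conjugation-naturality of $\gamma'$ along $i'_{FX_s}$ it yields $S_F(s,a)=(p(s),q(a))$; the torsor argument for part (i) is the standard one for Gr-categories and meshes with the paper's remark that an Ann-functor is an Ann-equivalence iff its underlying functor is an equivalence. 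The one step you leave genuinely schematic is $q(xa)=p(x)q(a)$: to ``read it off'' from the compatibility of $F$ with $\Lh$ you must first promote that compatibility to one with the induced isomorphisms $\widehat L^X$ of Lemma 1 (invoking their uniqueness), i.e.\ establish $\widehat F\circ F(\widehat L^X)=\widehat L'^{FX}\circ(id_{FX}\otimes\widehat F)\circ\widetilde F_{X,0}$, and then conjugate by it as in the definition of the module action $xa=\lambda_X(a)$; this chase is routine but should be written out, since it is precisely where the $\otimes$-structure of $F$ enters the claim that $(p,q)$ is a pair of homomorphisms.
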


Since $\Fa_{x,y}=(\bullet, \tau(x,y)), $ and
$\widetilde{F}_{x,y}=(\bullet, \nu(x,y)),$
  we call $g_F=(\tau, \nu)$ a pair of functions {\it associated } to $(\Fa, \widetilde{F}),$ and hence
   an Ann-functor $F:\mathcal S\rightarrow \mathcal S'$ can be regarded as a triple $(p, q, g_F).$
    It follows from the compatibility of $F$ with the constraints
    that
\begin{equation}\label{ht9}
q_{\ast}h-p^{\ast}h'=\partial (g_F),
\end{equation}
Moreover, Ann-functors  $(F,g_F)$ and $(F',g_{F'})$  are homotopic
if and only if $F'=F$, that is,  they are of the same type $(p,q)$,
and there is a function $t:R\to M'$ such that
  $g_{F'}=g_F+\partial t.$

We write
 $$\mathrm{Hom}_{(p, q)}^{Ann}[\mathcal S, \mathcal S']$$
 for the set of homotopy classes of Ann-functors of  type $(p,q)$ from $\mathcal S$ to $\mathcal S'.$



\begin{theorem}[Theorem 4.4, 4.5 {\cite{QH1}} ]\label{s63}
The functor  $F:\mathcal S\rightarrow \mathcal S' $ of  type $(p,
q)$ is an Ann-functor if and only if the obstruction $[k]$ vanishes
in $H_{MacL}^3(R, M')$. Then, there exists a bijection
\begin{equation}\label{ht11} \mathrm{Hom}_{(p, q)}^{Ann}[\mathcal S, \mathcal S']\leftrightarrow H^2_{MacL}(R,
M').
\end{equation}
\end{theorem}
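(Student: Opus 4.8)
The plan is to reduce everything to the two facts already recorded before the statement: the identity $q_{\ast}h-p^{\ast}h'=\partial(g_F)$ of \eqref{ht9}, valid whenever $F$ carries an Ann-functor structure with associated pair $g_F=(\tau,\nu)$, and the homotopy criterion, namely that two Ann-functors of the same type $(p,q)$ are homotopic exactly when their associated pairs differ by a coboundary $\partial t$. First I would check that the obstruction $k=q_{\ast}h-p^{\ast}h'$ is genuinely a $3$-cocycle: since $h\in Z^{3}_{MacL}(R,M)$ and $h'\in Z^{3}_{MacL}(R',M')$ (these are the conditions $A_{1}$--$A_{12}$, i.e. $M_{1}$--$M_{8}$) while $p^{\ast}$ and $q_{\ast}$ are chain maps, one has $\partial k=q_{\ast}(\partial h)-p^{\ast}(\partial h')=0$. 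Hence the class $[k]\in H^{3}_{MacL}(R,M')$ is well defined and the assertion ``$[k]$ vanishes'' is meaningful.

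For the obstruction criterion I would argue both implications through \eqref{ht9}. If $F$ is already an Ann-functor, then \eqref{ht9} writes $k=\partial g_F$, so $k\in B^{3}_{MacL}(R,M')$ and $[k]=0$. Conversely, suppose $[k]=0$, so that $k=\partial(\tau,\nu)$ for some $2$-cochain $(\tau,\nu)$. Spelling $\partial(\tau,\nu)$ out componentwise produces exactly the four equations $M_{9}$--$M_{12}$, whose left-hand sides are the $\sigma$-, $\alpha$-, $\lambda$- and $\rho$-components of $q_{\ast}h-p^{\ast}h'$; these are precisely the conditions that the functor $F$ equipped with $\breve F_{x,y}=(\bullet,\tau(x,y))$ and $\widetilde F_{x,y}=(\bullet,\nu(x,y))$ be symmetric monoidal for $\ts$, monoidal for $\tx$, and compatible with the two distributivity constraints. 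Thus any such $(\tau,\nu)$ turns $F$ into an Ann-functor. This yields ``$F$ is an Ann-functor $\iff[k]=0$''.

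For the bijection I would fix, once $[k]=0$ is granted, one Ann-functor $F_{0}$ of type $(p,q)$ with associated pair $g_{0}=g_{F_{0}}$, so that $k=\partial g_{0}$. Given any Ann-functor $F$ of type $(p,q)$ with pair $g_F$, the relations $\partial g_F=k=\partial g_{0}$ force $g_F-g_{0}\in Z^{2}_{MacL}(R,M')$, and I would define
\[
\Phi\colon \mathrm{Hom}_{(p,q)}^{Ann}[\mathcal S,\mathcal S']\longrightarrow H^{2}_{MacL}(R,M'),\qquad \Phi([F])=[\,g_F-g_{0}\,].
\]
The homotopy criterion says $F\simeq F'$ iff $g_{F'}-g_F=\partial t\in B^{2}_{MacL}(R,M')$, which makes $\Phi$ simultaneously well defined on homotopy classes and injective. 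For surjectivity, given a class represented by $c\in Z^{2}_{MacL}(R,M')$, the pair $g=g_{0}+c$ satisfies $\partial g=\partial g_{0}=k$, so by the reverse reading of \eqref{ht9} it is the associated pair of an Ann-functor $F$ of type $(p,q)$, whence $\Phi([F])=[c]$. Hence $\Phi$ is a bijection (a torsor identification depending on the base point $F_{0}$, but a bijection nonetheless).

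The main obstacle is the reverse implication in the obstruction criterion: one must be certain that \emph{solving} $k=\partial g_F$ genuinely reconstructs every compatibility diagram in the definition of an Ann-functor --- the symmetric-monoidal axiom for $\ts$, the monoidal axiom for $\tx$, and both distributivity squares --- with no hidden extra constraint beyond $M_{9}$--$M_{12}$. In other words, the real content is that \eqref{ht9} is an equivalence and not merely a one-way implication. Once this is secured, the cocycle computation for $[k]$ and the whole bijection are formal homological bookkeeping over the fixed base point $F_{0}$.
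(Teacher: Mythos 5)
Note first that the paper contains no proof of Theorem \ref{s63}: it is quoted as Theorems 4.4 and 4.5 of \cite{QH1}, and only the surrounding machinery (the obstruction $k=q_{\ast}h-p^{\ast}h'$, the identity \eqref{ht9}, and the homotopy criterion $g_{F'}=g_F+\partial t$) is recalled. So there is no in-paper argument to compare yours against; what can be said is that your reconstruction is exactly the obstruction-theoretic argument these ingredients are designed for, and it matches how the paper later uses the theorem (e.g.\ in the proof of Theorem \ref{dl51}). Your cocycle check for $k$, the forward implication via \eqref{ht9}, and the torsor-style bijection $\Phi([F])=[g_F-g_0]$ --- well defined and injective by the homotopy criterion, surjective by re-solving $\partial g=k$ --- are all correctly organized.

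The one point where your write-up stops short is the step you yourself flag: the claim that the components of $\partial(\tau,\nu)$, i.e.\ $M_9$--$M_{12}$, are ``precisely'' the Ann-functor axioms for $(F,\breve F,\widetilde F)$. For $\otimes$-associativity and the two distributivity diagrams this is literally componentwise ($A_{15}$--$A_{17}$ coincide with $M_{10}$--$M_{12}$), but for the $\oplus$-part it is not: the symmetric monoidal axioms for $(F,\breve F)$ translate into equations on $\xi$ and $\eta$ (of the shape $A_{13}$, $A_{14}$), whereas $M_9$ is an equation on $\sigma$, a different function related to $(\xi,\eta)$ only through \eqref{ht4}. Passing between the two requires the identities $\xi(x,y,z)=-\sigma(x,0,y,z)$, $\eta(x,y)=\sigma(0,x,y,0)$ together with a coherence argument for symmetric monoidal functors (Diagram 4); this is exactly the two-way computation carried out in Lemma \ref{bd8} of this paper for type-$(id,id)$ functors, and in \cite{QH1} for general type $(p,q)$. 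So the ``formal homological bookkeeping'' is fine, but the equivalence you defer is genuine mathematical content rather than a formality: a complete proof must import or redo that computation, and should also state explicitly that $h$ and $h'$ enter $Z^3_{MacL}$ only through their associated cocycles $\widehat h$, $\widehat{h'}$ of Lemmas \ref{bd6}, \ref{bd7}, since the structures themselves are $5$-tuples $(\xi,\eta,\alpha,\lambda,\rho)$, not Mac Lane cochains.
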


\section{Classification of Ann-categories}
\indent In order to prove the main result (Theorem \ref{dl51}) of
the paper,  we first prove that the set of cohomology classes of
structures on $(R,M)$ and the group $H^3_{MaL}(R,M)$ are coincident.
\begin{lemma}\label{bd6}
Each structure of an Ann-category of  type $(R,M)$ induces a
3-cocycle in $Z^3_{{MaL}}(R,M).$
\end{lemma}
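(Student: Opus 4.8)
The plan is to exhibit an explicit map that takes the structure $(\xi,\eta,\alpha,\lambda,\rho)$ of Theorem~\ref{dl3} to a quadruple $(\sigma,\alpha,\lambda,\rho)$ of the form required for membership in $Z^3_{MaL}(R,M)$, and then to verify the eight cocycle conditions $M_1$--$M_8$. The three maps $\alpha,\lambda,\rho:R^3\to M$ are already present in the structure and should be carried over unchanged, so the only new ingredient to construct is the map $\sigma:R^4\to M$. Comparing the Mac Lane condition $M_5$ with the Ann-category axiom $A_8$ suggests the right definition: $A_8$ has the same left-hand side as $M_5$ (up to sign and rearrangement), with the role played by $\sigma(xz,xt,yz,yt)$ in $M_5$ filled on the right-hand side of $A_8$ by a combination of $\xi$ and $\eta$ terms. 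Accordingly I would \emph{define}
\[
\sigma(u,v,z,t)=\xi(u+z,v,t)-\xi(u,z,v)-\xi(u,v,z)-\eta(v,z),
\]
or whatever precise combination of $\xi$ and $\eta$ makes $A_8$ read exactly as $M_5$ after the substitution $u=xz,\ v=xt,\ z\mapsto yz,\ t\mapsto yt$; pinning down this combination by matching $A_8$ against $M_5$ term-by-term is the first concrete step.

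Next I would verify the normalization conditions. The normalizations on $\alpha,\lambda,\rho$ are inherited verbatim from Theorem~\ref{dl3}, and the normalization of $\sigma$ (that it vanishes when the pairs of arguments degenerate appropriately) should follow from the normalization $\xi(0,y,z)=\xi(x,0,z)=\xi(x,y,0)=0$ together with $\eta(x,x)$-type relations, once $\sigma$ is spelled out. After that I would check the cocycle conditions one at a time. Conditions $M_1$ (associativity pentagon for $\alpha$) and $M_2$, $M_3$, $M_4$ are essentially $A_{12}$, and $A_9$--$A_{11}$ rewritten: each $M_i$ in this block should match one of the axioms $A_9$--$A_{12}$ after a sign normalization, so these reduce to bookkeeping. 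Condition $M_5$ holds by the very construction of $\sigma$ from $A_8$.

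The main obstacle will be the three conditions $M_6$, $M_7$, $M_8$, which involve $\sigma$ nontrivially and have no single matching axiom among $A_1$--$A_{12}$. For these I expect that each must be derived, rather than read off: $M_6$ and $M_7$ express the compatibility of $\sigma$ with left and right multiplication by a ring element $r$, and $M_8$ is the ``2-cocycle in each slot'' condition that makes $\sigma$ a genuine Mac Lane symbol. The strategy for $M_6$ (and symmetrically $M_7$) is to take the defining combination of $\xi$ and $\eta$ for $\sigma$ and feed it through the axioms $A_6$, $A_7$ (which relate $x\xi(\cdots)$ and $\xi(x\cdot,x\cdot,x\cdot)$ to differences of $\lambda$, respectively $\rho$) together with $A_4$, $A_5$ (which control $x\eta$ and $\eta x$); the $\lambda$- and $\rho$-terms generated this way should cancel against the $\lambda$-terms already written in $M_6$, leaving exactly the required $\sigma$-relation. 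For $M_8$, which is purely additive in the $\oplus$-direction and involves only $\sigma$, I would use the cocycle identity $A_1$ for $\xi$ repeatedly, exploiting that $\sigma$ is built from $\xi$ and that $A_1$ is precisely the statement that $\xi$ is an (additive) $3$-cocycle; the symmetry axioms $A_2$, $A_3$ for $\eta$ handle the $\eta$-contributions. The genuine labor, and the step most likely to require a delicate sign- and argument-tracking, is verifying $M_8$, since it is a degree-three identity in the single variable $\sigma$ and its proof must unwind the $\xi,\eta$ definition against several instances of $A_1$--$A_3$ simultaneously.
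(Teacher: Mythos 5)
Your proposal is correct and matches the paper's proof in its skeleton, but it takes a genuinely different route at the one hard step. Like the paper, you carry $\alpha,\lambda,\rho$ over unchanged and define $\sigma$ as the $\xi,\eta$-combination forced by matching $A_8$ against $M_5$; the paper writes this combination explicitly in \eqref{ht4} and, crucially, identifies it as the canonical interchange morphism $\mathbf{v}\colon (x+y)+(z+t)\to(x+z)+(y+t)$ of the reduced category. Normalization and the block $M_1$--$M_5$ are handled identically in both arguments (they are read off as $A_{12},A_{11},A_{10},A_9,A_8$). The divergence is at $M_6,M_7,M_8$: the paper performs no computation at all, invoking instead the coherence theorem for Ann-categories --- since $(L^r,\breve L^r)$ is a symmetric $\oplus$-functor and $\mathbf{v}$ is a canonical constraint morphism, Diagram 2 (compatibility of $\mathbf{v}$ with the left distributivity maps), its right-hand analogue, and Diagram 3 (the double interchange) commute, and read in $S_{\A}$ these diagrams are literally $M_6$, $M_7$, $M_8$. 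Your substitute --- push $r\sigma(x,y,z,t)$ through $A_6$/$A_4$ so that the $\xi(r\cdot,r\cdot,r\cdot)$ and $\eta(r\cdot,r\cdot)$ terms cancel against $\sigma(rx,ry,rz,rt)$, leaving only $\lambda$-terms (and symmetrically $A_7$/$A_5$ for $M_7$), then derive the purely additive $M_8$ from $A_1$--$A_3$ --- is viable and self-contained: if one carries out the $M_6$ computation, the five applications of $A_6$ together with one of $A_4$ leave exactly the six $\lambda$-terms demanded by $M_6$, and $M_8$ is a coherence identity of the symmetric monoidal structure $(\oplus,\xi,\eta)$, hence a consequence of $A_1$--$A_3$ alone, though this is the long sign-tracking computation you flag. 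The paper's route buys brevity and a conceptual reading of $\sigma$; yours buys independence from the coherence theorem, which is an external input in the paper. One small correction: the normalization of $\sigma$ uses $\eta(0,y)=\eta(x,0)=0$ (derivable from $A_2$, $A_3$ and the normalization of $\xi$), not any ``$\eta(x,x)$-type'' relation --- the latter is the regularity condition, which is precisely what is \emph{not} assumed in the general case treated here.
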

\begin{proof}
Let $h=(\xi,\eta,\alpha,\lambda,\rho)$ be a structure of an
Ann-category $\mathcal S$ of type $(R,M).$  We define a function
$\sigma:R^4\rightarrow M$ by
\begin{equation}\label{ht4}
\sigma(x,y,z,t)= \xi(x+y,z,t)-\xi(x,y,z)+\eta(y,z)+\xi(x,z,y)-\xi(x+z,y,t)
\end{equation}
This equation shows that $\sigma$ is just the morphism
$$\mathbf{v}:(x+y)+(z+t)\rightarrow (x+z)+(y+t)$$
in an Ann-category of  type $(R,M).$

First, the normalized property of $\sigma$ follows from the ones of
$\xi$ and $\eta$
$$\sigma(0,0,z,t)=\sigma(x,y,0,0)=\sigma(0,y,0,t)=\sigma(x,0,z,0)=\sigma(x,0,0,t)=0.$$
We now show
 that the quadruple $\widehat{h}=(\sigma,\alpha,\lambda,\rho)$ satisfies the
 relations $M_1-M_8,$ and $\widehat{h}$ is therefore a 3-cocycle. The relation $M_1$ is
  just the relation $A_{12}.$ The relations $M_2, M_3, M_4, M_5$ are just $A_{11}, A_{10}, A_9, A_8$, respectively.

According to the coherence theorem in an Ann-category of  type
$(R,M)$ the following Diagrams 2, 3 commute {\scriptsize
\[
\divide \dgARROWLENGTH by 2
\begin{diagram}
\node{r[(x+y)+(z+t)]} \arrow{e,t}{id\tx \bf
v}\arrow{s,l}{\mathcal{L}}
\node{r[(x+z)+(y+t)]} \arrow{s,r}{\mathcal{L}}\\
\node{r(x+y)+r(z+t)} \arrow{s,l}{\mathcal{L}\ts\mathcal{L}}
\node{r(x+z)+r(y+t)}\arrow{s,r}{\mathcal{L}\ts\mathcal{L}}\\
\node{(rx+ry)+(rz+rt)}\arrow{e,t}{\bf v}\node{(rx+rz)+(ry+rt)}
\end{diagram}
\]}
\centerline{\scriptsize Diagram 2}
 {\scriptsize \divide
\dgARROWLENGTH by 2 \setlength{\unitlength}{0.5cm}
\begin{picture}(27,9.0)
\put(0,6.2){$[(r+s)+(u+v)]+[(x+y)+(z+t)]$}
\put(0,3.2){$[(r+u)+(s+v)]+[(x+z)+(y+t)]$}
\put(0,0.2){$[(r+u)+(x+z)]+[(s+v)+(y+t)]$}

\put(13,6.2){$[(r+s)+(x+y)]+[(u+v)+(z+t)]$}
\put(13,3.2){$[(r+x)+(s+y)]+[(u+z)+(v+t)]$}
\put(13,0.2){$[(r+x)+(u+z)]+[(s+y)+(v+t)]$}

\put(10,6.4){\vector(1,0){2.8}} \put(11,6.7){${\bf v}$}
\put(10,0.4){\vector(1,0){2.8}} \put(10.7,0.7){${\bf v}+{\bf v}$}

\put(4.7,5.7){\vector(0,-1){1.6}} \put(3,4.8){${\bf v}+{\bf v}$}

\put(4.7,2.7){\vector(0,-1){1.6}} \put(4.1,1.8){${\bf v}$}

\put(17.8,5.7){\vector(0,-1){1.6}} \put(18.2,4.8){${\bf v}+{\bf v}$}

\put(17.8,2.7){\vector(0,-1){1.6}} \put(18.2,1.8){${\bf v}$}
\end{picture}}
\centerline{\scriptsize Diagram 3\;\;\;\;}
\vspace{0.3cm}

These commutative diagrams imply the relations $M_6, M_8$. The
relation $M_7$ follows from a
 commutative diagram which is analogous to the Diagram 2, where $r$ is tensored on the right side.
\end{proof}

\begin{lemma}\label{bd7}
Each Mac Lane 3-cocycle $(\sigma,\alpha,\lambda,\rho)$ is induced by
a structure $(\xi, \eta,$ $ \alpha, \lambda, \rho)$
 of an Ann-category of type $(R,M).$
\end{lemma}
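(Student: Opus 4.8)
The plan is to invert the passage of Lemma \ref{bd6}: I keep $\alpha,\lambda,\rho$ as given and produce the two missing functions $\eta$ and $\xi$, after which I must check that $(\xi,\eta,\alpha,\lambda,\rho)$ satisfies all of $A_1$--$A_{12}$ together with the normalization conditions, and that substituting it into \eqref{ht4} returns the prescribed $\sigma$. The function $\eta$ is in fact forced: putting $x=t=0$ in \eqref{ht4} and using the normalization of $\xi$ collapses the right-hand side to $\eta(y,z)$, so I simply set $\eta(y,z):=\sigma(0,y,z,0)$. Its normalization $\eta(0,z)=\eta(y,0)=0$ is read off from that of $\sigma$, and the antisymmetry $A_3$ follows by a suitable specialization of the purely additive identity $M_8$.

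The core of the argument is the construction of $\xi$. Setting $t=0$, respectively $x=0$, in \eqref{ht4} shows that $\sigma$ only determines the two antisymmetrizations $\xi(x,y,z)-\xi(x,z,y)$ and $\xi(x,y,z)-\xi(y,x,z)$, so a real choice is involved: I need a normalized $3$-cochain $\xi$ on the additive group $(R,+)$ that satisfies the cocycle identity $A_1$, has exactly these prescribed antisymmetrizations, and reproduces $\sigma$ through \eqref{ht4}. This is precisely the realizability of the additive, symmetric part of the structure: the conditions on $\sigma$ alone distilled from $M_6$, $M_7$, $M_8$ say that $\sigma$ is a cocycle in the cubical Eilenberg--Mac Lane complex of $(R,+)$, and Mac Lane's realization theorem \cite{ML} then supplies a compatible pair of associativity and commutativity constraints $(\xi,\eta)$ on the symmetric categorical group with $\pi_0=R$, $\pi_1=M$, giving $A_1$, $A_2$ and $A_3$ at once. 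It is here that the general case genuinely diverges from the regular one of \cite{Q2}: since $\eta(x,x)=0$ is no longer imposed, the realization must be carried out without the diagonal normalization, which is the reason the full (rather than Shukla) Mac Lane complex is needed.

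Once $\xi$ and $\eta$ are fixed, most of the remaining axioms are immediate. The associativity and distributivity relations $A_9$--$A_{12}$ are the hypotheses $M_4$--$M_1$, while $A_8$ becomes $M_5$ after replacing $\sigma(xz,xt,yz,yt)$ by \eqref{ht4} and using $A_1$ to reorder the $\xi$-terms. The two mixed relations $A_4$ and $A_5$ drop out by specialization: putting $x=t=0$ in $M_6$ and in $M_7$ and discarding the normalized $\lambda$- and $\rho$-terms yields exactly $\lambda(x,y,z)-\lambda(x,z,y)=x\eta(y,z)-\eta(xy,xz)$ and $\rho(x,y,z)-\rho(y,x,z)=\eta(x,y)z-\eta(xz,yz)$.

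The step I expect to be the main obstacle is the pair $A_6$, $A_7$, the multiplicativity of $\xi$ with respect to the two distributivity laws. Because these relations contain $\xi$, they cannot be consequences of the $\xi$-free identities $M_1$--$M_8$; they must instead be forced by the way $\xi$ was realized. The plan is to feed the full identities $M_6$ and $M_7$ (not merely their $x=t=0$ specializations) the expression \eqref{ht4} for $\sigma$, thereby rewriting the multiplicative defect $x\,\sigma-\sigma(x\cdot)$ as a combination of the defects $x\,\xi-\xi(x\cdot)$ and $x\,\eta-\eta(x\cdot)$; since the latter is already controlled by $A_4$, one can solve for the $\xi$-defect and match it against the $\lambda$- (respectively $\rho$-) coboundary appearing in $A_6$ (respectively $A_7$), again using $A_1$ for the reassociations. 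The delicate point, and the crux of the proof, is that this comparison only pins $\xi$ down up to its symmetric part; hence the realization of $\xi$ must be chosen compatibly with multiplication, and one has to verify that the obstruction to doing so vanishes --- equivalently, that the freedom in $\xi$ can always be used to absorb the discrepancy in $A_6$, $A_7$.
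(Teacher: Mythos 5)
Your proposal runs into a genuine gap, and it originates in a false premise at the very start of your construction of $\xi$. You claim that \eqref{ht4} only determines the two antisymmetrizations of $\xi$, because you only specialized $t=0$ and $x=0$; but specializing the \emph{third} variable, $z=0$, and using the normalization of $\xi$ and $\eta$ collapses \eqref{ht4} to $\sigma(x,y,0,t)=-\xi(x,y,t)$. So $\sigma$ determines $\xi$ on the nose, no choice is involved, and this is exactly how the paper proceeds: it \emph{defines}
$$\xi(x,y,z)=-\sigma(x,y,0,z),\qquad \eta(x,y)=\sigma(0,x,y,0),$$
and then every axiom $A_1$--$A_{12}$ follows from $M_1$--$M_8$ by direct specialization. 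In particular the step you single out as the main obstacle is trivial in this setting: $A_6$ and $A_7$ follow from $M_6$ and $M_7$ by putting $z=0$, once $\xi$ is given by the formula above. There is no realization theorem, no obstruction, and no ``compatibility of the realization with multiplication'' to arrange.

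Because of the wrong turn, your argument is also incomplete where it matters. First, Mac Lane's realization theorem supplies a pair $(\xi,\eta)$ representing a cohomology class of the additive part; it does not supply one satisfying \eqref{ht4} exactly with your prescribed antisymmetrizations, so even the existence of your $\xi$ is not justified by the tool you invoke. Second, you explicitly defer the crux --- you write that one ``has to verify that the obstruction to doing so vanishes'' for $A_6$, $A_7$ --- but never verify it, so the proposal is a plan rather than a proof precisely at the point you identify as decisive. Third, your treatment of $A_8$ assumes \eqref{ht4} holds ``by construction,'' whereas in the correct argument \eqref{ht4} is a nontrivial consequence that must be derived: the paper obtains it from two further specializations of $M_8$ (equations \eqref{ht7} and \eqref{ht8}) before substituting into $M_5$ to get $A_8$. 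Once you replace your realization step by the explicit formula $\xi(x,y,z)=-\sigma(x,y,0,z)$, all of these difficulties disappear and the rest of your outline (in particular $A_9$--$A_{12}$ from $M_4$--$M_1$, and $A_4$, $A_5$ from $M_6$, $M_7$ at $x=t=0$) goes through as you describe.
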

\begin{proof}
Let $(\sigma,\alpha,\lambda,\rho)$ be an element in $Z^{3}_{MaL}(R,M)).$ Set
$$\xi(x,y,z)=-\sigma(x,y,0,z),\eta(x,y)= \sigma(0,x,y,0),$$
we obtain
a 5-tuple of functions $h=(\xi,\eta,\alpha,\lambda,\rho).$ The
normalized properties of  $\xi, \eta$ follow from that of $\sigma.$

We now show
that $h$ is a structure of an Ann-category of  type
$(R,M).$ First, the relations $A_{12}-A_9$ are just $M_1-M_4$. The
relation $A_1$ follows from $M_8$ when  $u=0=x=y=z.$ The relation
$A_3$ follows from $M_8$ when $r=s=v=0=x=z=t.$ The relations $A_4$
and $A_5$ follow from $M_6$ and $M_7$, respectively,  when $x=t=0.$
The relations $A_6$ and $A_7$ follow from $M_6$ and $M_7$,
respectively, when  $z=0.$

To prove the relation $A_2,$ take $s=u=0=x=z=t$ in $M_8$ we obtain
\begin{equation}\label{ht5}
-\xi(r,y,v)+\xi(r,v,y)-\eta(v,y)+\sigma(r,v,y,0)=0
\end{equation}
Now, take $r=u=0=y=z=t$ in $M_8$ we obtain
$$
-\xi(x,s,v)+\eta(s,x)-\eta(s+v,x)+\sigma(s,v,x,0)=0.$$
In other words,
\begin{equation}
-\xi(y,r,v)+\eta(r,y)-\eta(r+v,y)+\sigma(r,v,y,0)=0 \label{ht6}
\end{equation}
Subtracting  \eqref{ht6} from \eqref{ht5}, we obtain the relation
$A_2.$

Finally, to prove the relation $A_8,$
note that
 $\sigma$ can be presented by
 $\xi, \eta$ as in \eqref{ht4}. Indeed, take $v=0=x=y=z$ in $M_8$ we obtain
\begin{equation}\label{ht7}
\sigma(r,s,u,t)+\xi(r+u,s,t)-\xi(r+s,u,t)-\sigma(r,s,u,0)=0.
\end{equation}
Now, take $v=s, y=u$ in \eqref{ht6} we obtain
\begin{equation}\label{ht8}
\xi(r,u,s)-\xi(r,s,u)-\eta(s,u)+\sigma(r,s,u,0)=0.
\end{equation}
Adding  \eqref{ht7} to \eqref{ht8} and doing  some appropriate calculations, we get \eqref{ht4}.

Because of \eqref{ht4}, $M_8$ becomes $A_8.$ This means the 5-tuple
of functions $h=(\xi,\eta,\alpha,\lambda,\rho)$
 is a structure of an Ann-category of  type $(R,M).$ Further,  this structure induces the
 3-cocycle $\widehat{h}=(\sigma,\alpha,\lambda,\rho).$
\end{proof}
\begin{lemma}\label{bd8}
The structures $h$ and $h'$ of the Ann-category of  type $(R,M)$ are
cohomologous if and only if the corresponding 3-cocycles
$\widehat{h},\widehat{h'}$ are cohomologous.
\end{lemma}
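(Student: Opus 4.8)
The plan is to reduce the whole equivalence to a single identity relating the pair $(\xi,\eta)$ with $\sigma$, since the remaining data carries over verbatim. Writing $h=(\xi,\eta,\alpha,\lambda,\rho)$, $h'=(\xi',\eta',\alpha',\lambda',\rho')$ and recalling that the induced cocycles are $\widehat h=(\sigma,\alpha,\lambda,\rho)$ and $\widehat{h'}=(\sigma',\alpha',\lambda',\rho')$ with $\sigma,\sigma'$ given by \eqref{ht4}, one observes that the $\alpha$-, $\lambda$- and $\rho$-components of a structure and of its induced cocycle literally coincide. Consequently the coboundary relations $M_{10},M_{11},M_{12}$ for $\widehat h-\widehat{h'}$ are word for word the relations $A_{15},A_{16},A_{17}$ for $h-h'$, and the normalization conditions imposed on $(\tau,\nu)$ in Proposition~\ref{pr4} are exactly those required of a Mac Lane $3$-coboundary. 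Thus it suffices to prove, for one and the same pair $(\tau,\nu)$, that the conjunction of $A_{13}$ and $A_{14}$ is equivalent to $M_9$; the asserted correspondence of cohomology classes then follows by keeping $(\tau,\nu)$ fixed throughout.

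For the implication ``$\widehat h,\widehat{h'}$ cohomologous $\Rightarrow$ $h,h'$ cohomologous'' I first record the two recovery identities $\xi(x,y,z)=-\sigma(x,y,0,z)$ and $\eta(x,y)=\sigma(0,x,y,0)$, together with their primed analogues; both follow at once by specializing \eqref{ht4} and discarding the terms killed by the normalization of $\xi$ and $\eta$. Assuming $M_9$, I then specialize it at the arguments $(x,y,0,z)$ and $(0,x,y,0)$. After deleting the summands $\tau(0,-)$ and $\tau(-,0)$, which vanish by the normalization of $\tau$, the first specialization yields $A_{13}$ (up to the global sign carried by the recovery identity for $\xi$) and the second yields $A_{14}$. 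This direction is therefore immediate once the recovery identities are in hand.

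The converse implication is the only step requiring a genuine computation, and is the main obstacle. Starting from \eqref{ht4} and its primed counterpart, I form $\sigma-\sigma'$ as an alternating sum of five differences $\xi(\cdots)-\xi'(\cdots)$ and one difference $\eta(y,z)-\eta'(y,z)$. Substituting $A_{13}$ into each of the five $\xi$-differences and $A_{14}$ into the $\eta$-difference expresses $\sigma-\sigma'$ entirely through $\tau$. The expected mechanism is a telescoping cancellation: the intermediate terms $\tau(x+y,z)$, $\tau(x,y+z)$, $\tau(x+y+z,t)$, $\tau(y,z)$, $\tau(z,y)$ and $\tau(x+z,y)$ occur in cancelling pairs, leaving exactly the six surviving terms $\tau(x,y)+\tau(z,t)-\tau(x+z,y+t)-\tau(x,z)-\tau(y,t)+\tau(x+y,z+t)$, which is precisely $M_9$. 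The sign bookkeeping in this cancellation is the only delicate point, but it needs no structural input beyond $A_{13}$, $A_{14}$ and the commutativity of $+$. Since in both directions the witnessing pair $(\tau,\nu)$ is left unchanged and already satisfies the common normalization conditions, this establishes the equivalence ``$h\sim h'$ iff $\widehat h\sim\widehat{h'}$''.
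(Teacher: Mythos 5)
Your proof is correct. The reduction is sound: since the $\alpha$-, $\lambda$-, $\rho$-components of a structure and of its induced cocycle coincide, the relations $M_{10}$--$M_{12}$ for $\widehat h-\widehat{h'}$ are literally $A_{15}$--$A_{17}$, the normalization conditions on $(\tau,\nu)$ agree on both sides, and everything reduces to the equivalence of $A_{13}\wedge A_{14}$ with $M_9$ for one fixed $\tau$. Your backward direction is exactly the paper's argument: specialize $M_9$ at $(x,y,0,z)$ and $(0,x,y,0)$ and use the recovery identities $\sigma(x,y,0,z)=-\xi(x,y,z)$, $\sigma(0,x,y,0)=\eta(x,y)$ (your form of the first identity is the one actually consistent with \eqref{ht4}; the paper's proof misprints it as $-\sigma(x,0,y,z)$, and its statement of $A_{14}$ carries a similar typo, $\eta'(y,x)$ in place of $\eta'(x,y)$). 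Where you genuinely depart from the paper is the forward direction: the paper invokes Lemma \ref{bd5} to produce an Ann-functor $(F,\breve F,\widetilde F)$ with $F=id_{(R,M)}$, $\breve F=\tau$, $\widetilde F=\nu$, and then reads off $M_9$ from the coherence of symmetric monoidal functors (commutativity of its Diagram 4 comparing $F(\mathbf v)$ with $\mathbf v'$), whereas you substitute $A_{13}$ and $A_{14}$ directly into \eqref{ht4} and cancel. I checked your telescoping: the terms $\tau(x{+}y{+}z,t)$, $\tau(x{+}y,z)$, $\tau(y,z)$, $\tau(x,y{+}z)$, $\tau(z,y)$, $\tau(x{+}z,y)$ each occur once with each sign, leaving precisely $\tau(x,y)+\tau(z,t)-\tau(x{+}z,y{+}t)-\tau(x,z)-\tau(y,t)+\tau(x{+}y,z{+}t)$, i.e. $M_9$. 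Your route is more elementary and self-contained, using only the equations $A_{13}$, $A_{14}$, \eqref{ht4} and commutativity of $+$, with no appeal to Lemma \ref{bd5} or the coherence theorem; the paper's route is shorter on the page and explains conceptually why $M_9$ must hold (it is the compatibility of the morphism $\mathbf v$ with the identity Ann-functor), but at the cost of treating coherence as a black box.
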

\begin{proof}
By Lemma \ref{bd7},  the structures $h$ and $ h'$ induce elements
$\widehat{h}$ and $ \widehat{h'}$ in $Z^{3}_{MaL}(R,M),$
respectively.
 By Lemma \ref{bd5}, 
the functions $\alpha -\alpha'$, $\lambda - \lambda',$ $\rho -
\rho'$ satisfy the relations $M_{10}-M_{12},$ where $\breve{F}=\tau,
\widetilde{F}=\nu.$ Besides, the following diagram commutes because
of the coherence of a symmetric monoidal functor {\scriptsize
\[
\divide \dgARROWLENGTH by 2
\begin{diagram}
\node{F((x+y)+(z+t))}\arrow{s,l}{F(\bf v)}
\arrow{e,t}{\breve{F}} \node{F(x+y)+F(z+t)} \arrow{s,r}{\breve{F}+\breve{F}}\\
\node{F((x+z)+(y+t))} \arrow{s,l}{\breve{F}}
\node{(F(x)+F(y))+(F(z)+F(t))}\arrow{s,r}{\bf v'}\\
\node{F(x+z)+F(y+t)} \arrow{e,t}{\breve{F}+\breve{F}}
\node{(F(x)+F(z))+(F(y)+F(t)).}
\end{diagram}
\]}
\centerline{\scriptsize Diagram 4}
\vspace{0.2pt}

Note that $F=id$ and $\breve{F}=\tau,$  so the above commutative diagram implies
\begin{eqnarray}
\sigma(x,y,z,t)-\sigma'(x,y,z,t)&=&\tau(x+y,z+t)+\tau(x,y)+\tau(z,t)-\tau(x+z,y+t)\nonumber\\
&&-\tau(x,z)-\tau(y,t).\nonumber
\end{eqnarray}
That means $\sigma-\sigma'$ satisfies $M_9.$ Thus, $\widehat{h}$ and
$\widehat{h'}$ belong to the same cohomology class of
$H^{3}_{{MaL}}(R,M).$

Now, assume that $\widehat{h}-\widehat{h'}\in B^{3}_{MaL}(R,M).$
Then $\alpha -\alpha'$, $\lambda - \lambda',$ $\rho - \rho'$ satisfy
$M_{10}-M_{12}$ which are just the relations $A_{15}-A_{17}$. By
\eqref{ht4}, the definition of $\sigma$ and the normalized property
of $\xi, \eta,$ we have
$$\xi(x, y, z)= -\sigma(x, 0, y, z),\ \xi'(x, y, z)= -\sigma'(x, 0,y, z),$$
$$\eta(x, y)= \sigma(0, x, y, 0),\  \eta'(x, y)= \sigma'(0, x, y, 0).$$
Therefore, 
$A_{13}$, $A_{14}$ are obtained from $M_9$,  and thus $h,h'$ are cohomologous structures.
\end{proof}
Let Struct$[R,M]$ denote the set of cohomology  classes of
structures on $(R,M)$. Then, Lemmas \ref{bd6}, \ref{bd7}, \ref{bd8} lead to the following
result.
\begin{proposition} There exists a bijection
\begin{eqnarray*}
\mathrm{Struct}[R,M]&\rightarrow& H^{3}_{MacL}(R,M) \\
\ [h=(\xi,\eta,\alpha,\lambda,\rho)] &\mapsto&
[\widehat{h}=(\sigma,\alpha,\lambda,\rho)]
\end{eqnarray*}
\end{proposition}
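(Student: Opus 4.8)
The plan is to assemble the bijection directly from the three preceding lemmas, since each supplies one of the three ingredients a bijection requires: a well-defined forward map, a well-defined inverse, and compatibility with the equivalence relations on both sides. First I would observe that Lemma \ref{bd6} provides, for each structure $h=(\xi,\eta,\alpha,\lambda,\rho)$ on $(R,M)$, a genuine $3$-cocycle $\widehat{h}=(\sigma,\alpha,\lambda,\rho)\in Z^3_{MaL}(R,M)$, where $\sigma$ is defined from $\xi,\eta$ by equation \eqref{ht4}. This gives the assignment $h\mapsto\widehat{h}$ at the level of cochains; the content of Lemma \ref{bd8} is precisely that this assignment respects the equivalence relations, namely that $h$ and $h'$ are cohomologous structures if and only if $\widehat{h}$ and $\widehat{h'}$ are cohomologous $3$-cocycles. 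Passing to classes, the map
\[
[h]\mapsto[\widehat{h}]
\]
is therefore well defined and injective as a map $\mathrm{Struct}[R,M]\to H^3_{MaL}(R,M)$.

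Next I would establish surjectivity using Lemma \ref{bd7}: every $3$-cocycle $(\sigma,\alpha,\lambda,\rho)$ arises from some structure $h$ via the recipe $\xi(x,y,z)=-\sigma(x,y,0,z)$, $\eta(x,y)=\sigma(0,x,y,0)$, and moreover the structure so constructed induces back exactly the cocycle $\widehat{h}=(\sigma,\alpha,\lambda,\rho)$. Thus every cohomology class in $H^3_{MaL}(R,M)$ has a preimage, so the map is onto. Combining injectivity (from the ``only if'' direction of Lemma \ref{bd8}), surjectivity (from Lemma \ref{bd7}), and well-definedness (from the ``if'' direction of Lemma \ref{bd8}) yields the asserted bijection. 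The key point making everything cohere is that the passage $h\mapsto\widehat{h}$ and the passage $(\sigma,\alpha,\lambda,\rho)\mapsto h$ are mutually inverse at the cochain level after normalization, which is exactly what Lemmas \ref{bd6} and \ref{bd7} jointly record.

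The main subtlety to watch is the matching of the two equivalence relations in Lemma \ref{bd8}, since the structure side carries \emph{five} functions whereas the cocycle side packages $\xi$ and $\eta$ into the single datum $\sigma$. One must verify that the coboundary relations $A_{13},A_{14}$ on $(\xi,\eta)$ translate correctly into the single relation $M_9$ on $\sigma$, and conversely that $M_9$ recovers both $A_{13}$ and $A_{14}$ through the identities $\xi(x,y,z)=-\sigma(x,0,y,z)$ and $\eta(x,y)=\sigma(0,x,y,0)$. This is the one place where the compression of information could in principle lose or duplicate a relation, but Lemma \ref{bd8} has already discharged it via Diagram 4 and the coherence of the symmetric monoidal functor. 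Given that, the proposition is essentially a formal consequence, and the proof reduces to citing the three lemmas in the order: construction of $\widehat{h}$ (Lemma \ref{bd6}), surjectivity (Lemma \ref{bd7}), and compatibility of equivalences (Lemma \ref{bd8}).
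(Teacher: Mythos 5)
Your proposal is correct and takes essentially the same route as the paper: the paper offers no separate argument for this Proposition, stating only that Lemmas \ref{bd6}, \ref{bd7}, \ref{bd8} lead to it, and your assembly (cochain-level map from Lemma \ref{bd6}, surjectivity from Lemma \ref{bd7}, well-definedness and injectivity from the two directions of Lemma \ref{bd8}) is exactly that implicit proof written out. One immaterial slip: the labels are swapped, since well-definedness uses the ``only if'' direction of Lemma \ref{bd8} ($h,h'$ cohomologous $\Rightarrow$ $\widehat{h},\widehat{h'}$ cohomologous) while injectivity uses the ``if'' direction, but you correctly invoke both, so nothing is affected.
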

By the above lemma, we regard
each cohomology class
$[h]=[(\xi,\eta,\alpha,\lambda,\rho)]$ as an element of the group
$H^{3}_{MacL}(R,M)$.

Let $\mathbf{Ann}$ refer to the category whose objects are Ann-categories, and whose morphisms are their Ann-functors.

We determine the category $\bf H^{3}_{Ann}$ whose objects are triples $(R,M,[h]),$ where $[h]\in H^3_{MacL}(R,M)$. 
 A morphism $(R,M,[h])\rightarrow (R',M',[h'])$ in $\bf H^{3}_{Ann}$ is a pair $(p, q)$ such that there exists
 a function $g:R^2\rightarrow M'$ so that $(p,q,g):(R,M,h)\rightarrow (R',M',h')$ is an Ann-functor, that is,  $[p^\ast h']
 =[q_\ast h]\in H^3_{MacL}(R,M')$. The composition in $\bf H^{3}_{Ann}$ is defined by
$$(p', q')\circ (p,q)=(p'p, q'q).$$
Note that, {\it Ann-functors $F, F':\mathcal A \rightarrow \mathcal
A'$ are homotopic if and only if $F_i=F_i', i=0,1$ and
$[g_F]=[g_{F'}]$ in $H^{2}_{MacL}(R,M)$}. Denote by
$$\text{Hom}_{(p,q)}^{{Ann}}[\mathcal A,\mathcal A']$$ the set of
homotopy classes of Ann-functors from $\mathcal A$ to $\mathcal A'$
inducing the same pair $(p,q)$, we prove the following
classification result.

\begin{theorem} [Classification Theorem]\label{dl51}
There is a functor
\begin{eqnarray*}
 d:{\bf Ann}&\rightarrow &\bf H^{3}_{Ann}\\
\mathcal A &\mapsto &(\pi_0\mathcal A, \pi_1\mathcal A, [h_{\mathcal A}])
\end{eqnarray*}
which has the following properties:

\emph {i)}   $dF$ is an isomorphism if and only if $F$ is an
 equivalence,

\emph {ii)} $d$ is  surjective on  objects,

\emph {iii)} $d$ is full, but not faithful. For $(p, q):d\mathcal A\rightarrow d\mathcal A',$  there is a bijection
\begin{equation}\label{ht12}
\overline{d}:\mathrm{Hom}_{(p, q)}^{\text{Ann}}[\mathcal A, \mathcal
A']\rightarrow H^2_{MacL}(\pi_0\mathcal A, \pi_1\mathcal
A').
\end{equation}
\end{theorem}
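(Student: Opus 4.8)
The plan is to reduce everything to the reduced Ann-categories $S_{\mathcal A}$, where the machinery already assembled applies verbatim: the bijection $\mathrm{Struct}[R,M]\leftrightarrow H^3_{MacL}(R,M)$, Proposition~\ref{md22}, and Theorem~\ref{s63}. First I would check that $d$ is well defined on objects. Although $[h_{\mathcal A}]$ is computed from a choice of stick, Proposition~\ref{pr4} together with Lemma~\ref{bd8} shows that different sticks yield cohomologous structures, hence the same class in $H^3_{MacL}(\pi_0\mathcal A,\pi_1\mathcal A)$. On morphisms I set $dF=(F_0,F_1)$. To see this is a morphism in $\mathbf H^3_{Ann}$, I invoke Proposition~\ref{md22}: the Ann-functor $F$ induces $S_F\colon S_{\mathcal A}\to S_{\mathcal A'}$ of type $(p,q)=(F_0,F_1)$, and since $S_F$ is an Ann-functor, equation~(\ref{ht9}) gives $q_\ast h_{\mathcal A}-p^\ast h_{\mathcal A'}=\partial(g_{S_F})$, so that $[q_\ast h_{\mathcal A}]=[p^\ast h_{\mathcal A'}]$ in $H^3_{MacL}(\pi_0\mathcal A,\pi_1\mathcal A')$, which is exactly the defining condition. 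Functoriality, $d(F'F)=dF'\,dF$ and $d(\mathrm{id})=\mathrm{id}$, follows because $F\mapsto(F_0,F_1)$ is compatible with composition by construction.

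Property (i) is then immediate from Proposition~\ref{md22}(i): a morphism $(p,q)$ in $\mathbf H^3_{Ann}$ is invertible exactly when $p$ and $q$ are isomorphisms, which occurs precisely when $F$ is an equivalence. Property (ii) follows from Lemma~\ref{bd7}, equivalently the surjectivity half of the $\mathrm{Struct}$--$H^3$ bijection: given a triple $(R,M,[h])$, any representative cocycle is induced by a structure on $(R,M)$, and the associated Ann-category of type $(R,M)$ maps under $d$ to $(R,M,[h])$.

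For (iii), fullness asserts that a morphism $(p,q)\colon d\mathcal A\to d\mathcal A'$ lifts to an Ann-functor. Its defining condition is precisely the vanishing of the obstruction $[k]=[q_\ast h-p^\ast h']$ in $H^3_{MacL}$, so Theorem~\ref{s63} produces an Ann-functor $\bar F\colon S_{\mathcal A}\to S_{\mathcal A'}$ of type $(p,q)$; composing with the canonical Ann-equivalences yields $F=H'\circ\bar F\circ G\colon\mathcal A\to\mathcal A'$ with $dF=(p,q)$, since $G$ and $H'$ induce identities on $\pi_0$ and $\pi_1$. To obtain the bijection $\overline d$, I would first establish that the assignments $F\mapsto G'FH$ and $\bar F\mapsto H'\bar FG$ are mutually inverse up to homotopy and preserve the type $(p,q)$, giving $\mathrm{Hom}^{Ann}_{(p,q)}[\mathcal A,\mathcal A']\cong\mathrm{Hom}^{Ann}_{(p,q)}[S_{\mathcal A},S_{\mathcal A'}]$; Theorem~\ref{s63} then identifies the latter set with $H^2_{MacL}(\pi_0\mathcal A,\pi_1\mathcal A')$. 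Non-faithfulness is visible from this fiber computation: whenever $H^2_{MacL}\neq 0$ there exist non-homotopic Ann-functors inducing the same pair $(p,q)$.

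The main obstacle I anticipate is not any single cohomological identity, since those are packaged in Theorem~\ref{s63}, but the bookkeeping verifying that reduction $F\mapsto S_F$ descends to a well-defined bijection on homotopy classes that respects both the type and the homotopy relation. This rests on the canonical Ann-equivalences $G,H,G',H'$ and the commuting square of Proposition~\ref{md22}(ii); the delicate point is confirming that homotopies of Ann-functors between $\mathcal A$ and $\mathcal A'$ correspond bijectively to homotopies between $S_{\mathcal A}$ and $S_{\mathcal A'}$, so that the composite with Theorem~\ref{s63} is a genuine bijection rather than merely a surjection.
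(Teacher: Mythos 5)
Your proposal is correct and takes essentially the same route as the paper's own proof: reduce to the Ann-categories $S_{\mathcal A}$, use Proposition \ref{md22} for well-definedness on morphisms and for property (i), Lemma \ref{bd7} for surjectivity on objects, and obtain \eqref{ht12} as the composite of the bijection $[F]\mapsto[S_F]$ with the bijection \eqref{ht11} of Theorem \ref{s63}. The only cosmetic difference is that for fullness you invoke the obstruction-vanishing half of Theorem \ref{s63}, whereas the paper reads the required $2$-cochain $g$ directly off the definition of a morphism in $\bf H^{3}_{Ann}$; these are the same condition.
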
\label{dl14}
\begin{proof}
In the Ann-category  $\mathcal A$, for each stick $(X_s,i_X)$ one
can construct a reduced Ann-category $(\pi_0\mathcal A,\pi_1\mathcal
A,h)$. If the choice of the stick is modified, then the 3-cocycle
$h$ changes to a cohomologous 3-cocycle $h'.$ Therefore,
$\mathcal{A}$ uniquely determines   an element $[h]\in
H^3(\pi_0\mathcal{A},\pi_1\mathcal{A})$.

For Ann-functors
$$\mathcal A\stackrel{F}{\longrightarrow}\mathcal A'\stackrel{F'}{\longrightarrow}\mathcal A'',$$
\noindent 
it can be seen that $d(F'\circ F)=dF'\circ dF$, and $d({id_\mathcal
A})=id_{d\mathcal A}$. Therefore, $d$ is a functor.

i) According to Proposition \ref{md22}.

ii) If $(R,M,[h])$ is an object of $\bf{H}^3_{Ann}$, then $\mathcal
S=(R,M,h)$ is an Ann-category of  type $(R,M),$ and obviously
$d\mathcal S=(R,M,[h])$.

iii) If $(p,q)$ is a morphism in
$\text{Hom}_{\bf{H}^3_{\text{Ann}}}(d\mathcal A,d\mathcal A')$, then
there is a function
 $g=(\tau, \nu)$, $\tau, \nu:(\pi_0\mathcal{A})^2\rightarrow \pi_1\mathcal{A'}$ satisfying the relation \eqref{ht9}, and therefore
$$K=(p,q,g):(\pi_0\mathcal A, \pi_1\mathcal A,h_\mathcal A)\rightarrow
(\pi_0\mathcal A', \pi_1\mathcal A',h_{\mathcal A'})$$ is an
Ann-functor. Thus, the composition
 $F=H' K
G:\mathcal A\to\mathcal A'$ is an Ann-functor and $d F=(p,q)$. This
shows that $d$ is full.

In order to obtain the bijection \eqref{ht12}, we prove
that the
correspondence
\begin{align}\label{ht13}
\Omega:\text{Hom}_{(p,q)}^{\text{Ann}}[\mathcal A,\mathcal A']&\rightarrow\
\text{Hom}_{(p,q)}^{\text{Ann}}[S_{\A},S_{\A'}]\\
{[F]}&\mapsto [S_F]\notag
\end{align}
is a bijection.

Clearly, if $F, F':\mathcal A\rightarrow \mathcal A'$ are homotopic
then induced Ann-functors  $S_F, S_{F'}$ are homotopic. Conversely,
if $S_F$ and $S_{F'}$ are homotopic then the compositions
$E=H'(S_F)G$ and $E'=H'(S_{F'})G$ are homotopic. Ann-functors  $E$
and $ E'$ are homotopic to $F$ and $F'$, respectively. So, $F$ and
$F'$ are homotopic. This shows that  $\Omega$ is an injection.

Now, if $K=(p,q,g):S_{\A}\rightarrow S_{\A'}$ is an Ann-functor then the composition
$$F=H' K
G:\mathcal A\rightarrow\mathcal A'$$ is an Ann-functor with $S_F=K$,
that is,
 $\Omega$ is  surjective. Now, the bijection \eqref{ht12} is the composition of \eqref{ht13} and \eqref{ht11}.
\end{proof}

Based on Theorem \ref{dl51}, Ann-categories having the same first
two invariants can be classified  up to equivalence.

Let $R$ be a ring with a unit, $M$ be an $R$-bimodule which is
regarded as a ring with null-multiplication. We say that
 the Ann-category $\mathcal A$ has a {\it pre-stick of  type} $(R,M)$ if there is a pair of ring isomorphisms
  $\epsilon=(p,q)$
\[p: R\to \pi_0\mathcal A,\quad q: M\to \pi_1\mathcal A\]
which are compatible with the module action,
\[q(su)=p(s)q(u),\]
 where $s\in R, u\in M$. The pair $(p, q)$ is called a {\it pre-stick of  type} $(R, M)$ to the Ann-category  $\mathcal A$.

 A {\it morphism} between two Ann-categories $\mathcal A, \mathcal A'$ having pre-sticks of  type $(R, M)$
  (with their  pre-sticks  are $\epsilon=(p, q)$ and $\epsilon'=(p', q')$, respectively) is an Ann-functor
  $(F,\Fa, \Fm): \mathcal A\ri \mathcal A'$ such that the following diagrams commute
 \[\begin{diagram}
\node{\pi_0\mathcal A}\arrow[2]{e,t}{ F_0} \node[2]{\pi_0\mathcal A'}
\node[3]{\pi_1\mathcal A}\arrow[2]{e,t}{F_1}
\node[2]{\pi_1\mathcal A'}\\
\node[2]{R,}\arrow{nw,b}{p}\arrow{ne,b}{p'}
\node[5]{M,}\arrow{nw,b}{q}\arrow{ne,b}{q'}
\end{diagram}\]
where $(F_0, F_1)$ is a pair of homomorphisms induced by
$(F,\Fa,\Fm).$

Clearly, it follows from the definition of an Ann-functor  that
$F_0, F_1$ are isomorphisms, 
therefore $F$ is an equivalence.

Denote by
$${\bf Ann}[R, M]$$
  the set of equivalence classes of Ann-categories whose
pre-sticks are of  type $(R, M)$. One can prove the following result
based on Theorem \ref{dl51}.
\begin {theorem}  There is a bijection
\begin{eqnarray*}
\Gamma:{\bf Ann}[R, M]&\ri& H^{3}_{MacL}(R, M)\\
{[\mathcal A]}&\mapsto&  q^{-1}_\ast p^\ast [h_{\mathcal A}]
\end{eqnarray*}
\end {theorem}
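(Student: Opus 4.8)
The plan is to deduce the theorem from the Classification Theorem (Theorem \ref{dl51}) together with the functoriality of $H^3_{MacL}$ in both of its arguments. Since a pre-stick $(p,q)$ of type $(R,M)$ consists of a ring isomorphism $p:R\to\pi_0\A$ and a compatible group isomorphism $q:M\to\pi_1\A$, the maps $p^\ast:H^3_{MacL}(\pi_0\A,\pi_1\A)\to H^3_{MacL}(R,\pi_1\A)$ and $q^{-1}_\ast:H^3_{MacL}(R,\pi_1\A)\to H^3_{MacL}(R,M)$ are defined, so $\Gamma$ makes sense on objects once we recall from the proof of Theorem \ref{dl51} that $[h_\A]$ is independent of the chosen stick. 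Throughout I use that $(\phi\psi)^\ast=\psi^\ast\phi^\ast$ in the ring variable, that pushout is covariant in the module variable, and---most importantly---that a pullback in the ring argument commutes with a pushout in the module argument, since the two act on independent data.

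First I would check that $\Gamma$ is constant on equivalence classes. Let $F:\A\to\A'$ be a morphism of Ann-categories carrying pre-sticks $(p,q),(p',q')$ of type $(R,M)$, so that $F_0p=p'$, $F_1q=q'$, and $F$ is an equivalence. By Theorem \ref{dl51}(i) the pair $(F_0,F_1)$ is an isomorphism $d\A\to d\A'$ in $\mathbf{H}^3_{Ann}$, i.e. $F_0^\ast[h_{\A'}]=(F_1)_\ast[h_\A]$ in $H^3_{MacL}(\pi_0\A,\pi_1\A')$. Applying $p^\ast$ and then $(q')^{-1}_\ast$, and using $p^\ast F_0^\ast=(F_0p)^\ast=(p')^\ast$, the commutation of $p^\ast$ with $(F_1)_\ast$, and $(q')^{-1}F_1=q^{-1}$ (which follows from $F_1q=q'$), I obtain $(q')^{-1}_\ast(p')^\ast[h_{\A'}]=q^{-1}_\ast p^\ast[h_\A]$, that is $\Gamma[\A']=\Gamma[\A]$. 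The quasi-inverse of $F$ again respects the pre-sticks, so this relation is symmetric and $\Gamma$ descends to $\mathbf{Ann}[R,M]$.

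Surjectivity is then immediate: given $[h]\in H^3_{MacL}(R,M)$, the preceding proposition identifying $\mathrm{Struct}[R,M]$ with $H^3_{MacL}(R,M)$ furnishes a structure representing $[h]$, hence an Ann-category $\mathcal S=(R,M,h)$ of type $(R,M)$; equipping it with the identity pre-stick $p=\mathrm{id}_R$, $q=\mathrm{id}_M$ gives $\Gamma[\mathcal S]=[h]$. For injectivity, suppose $q^{-1}_\ast p^\ast[h_\A]=(q')^{-1}_\ast(p')^\ast[h_{\A'}]$, and set $p_0=p'p^{-1}:\pi_0\A\to\pi_0\A'$ and $q_0=q'q^{-1}:\pi_1\A\to\pi_1\A'$, both isomorphisms. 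Applying $q'_\ast$ and then $(p^{-1})^\ast$ to this identity, and using $q'_\ast q^{-1}_\ast=(q_0)_\ast$, $(p^{-1})^\ast p^\ast=\mathrm{id}$, the commutation of $(p^{-1})^\ast$ with $(q_0)_\ast$, and $(p^{-1})^\ast(p')^\ast=p_0^\ast$, I arrive at $(q_0)_\ast[h_\A]=p_0^\ast[h_{\A'}]$ in $H^3_{MacL}(\pi_0\A,\pi_1\A')$; this is exactly the statement that $(p_0,q_0)$ is a morphism $d\A\to d\A'$ in $\mathbf{H}^3_{Ann}$. Since $d$ is full (Theorem \ref{dl51}(iii)), there is an Ann-functor $F:\A\to\A'$ with $(F_0,F_1)=(p_0,q_0)$; it is an equivalence by Theorem \ref{dl51}(i), and $F_0p=p'$, $F_1q=q'$ show it respects the pre-sticks, whence $[\A]=[\A']$.

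The computations in the well-definedness and injectivity steps are literally the same change-of-variables identity read in the two directions, so the only genuinely delicate point is the bookkeeping: tracking which cohomology group each term inhabits and invoking at each step the commutation of the ring-variable pullback with the module-variable pushout. The substantive content---that a pair $(p_0,q_0)$ satisfying the cohomological compatibility is realized by an honest Ann-equivalence---is not established here directly but is precisely the fullness and equivalence-detection clauses of Theorem \ref{dl51}, which is why the present theorem comes out as a corollary of that result rather than a fresh argument.
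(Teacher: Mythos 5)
Your proof is correct and follows essentially the same route as the paper: both deduce the theorem from Theorem \ref{dl51}, establishing well-definedness via the induced pair $(F_0,F_1)$ and the relation $F_0^\ast[h_{\mathcal A'}]=(F_1)_\ast[h_{\mathcal A}]$, surjectivity via the Ann-category $(R,M,h)$ with the identity pre-stick, and injectivity by realizing the cohomological compatibility of $(p'p^{-1},q'q^{-1})$ by an actual Ann-equivalence. Your appeal to fullness of $d$ is merely a repackaging of the paper's explicit composition $H'\circ\epsilon'\circ J\circ\epsilon^{-1}\circ G$, since that kind of composition is precisely how fullness of $d$ was established in Theorem \ref{dl51}(iii).
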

\begin {proof}
By Theorem \ref{dl51}, each Ann-category  $\mathcal A$ determines a
unique element $[h_{\mathcal A}]\in H^3_{MacL}(\pi_0\mathcal A,
\pi_1\mathcal A)$, and hence an element
$$\epsilon[h_{\mathcal A}]=q^{-1}_\ast p^\ast [h_{\mathcal A}]\in
H^3_{MacL}(R, M).$$ Now if $F:\mathcal A \rightarrow \mathcal A'$ is
a functor between Ann-categories whose pre-sticks are of  type
$(p,q),$ then the induced Ann-functor $S_F=(p, q, g_F)$ satisfies
the relation \eqref{ht9}, and therefore
$$p^\ast [h_{\mathcal A'}]=q_\ast [h_{\mathcal A}].$$
One can check that
$$\epsilon'[h_{\mathcal A'}]=\epsilon[h_{\mathcal A}].$$
This means $\Gamma$ is a map. Moreover, it is an injection. Indeed,
if $\Gamma[\mathcal A]=\Gamma[\mathcal A']$, then
$$\epsilon(h_{\mathcal A})-\epsilon'(h_{\mathcal A'})=\partial g.$$
Thus, there exists an Ann-functor $J$ of  type $(id,id)$ from
$\mathcal I=(R, M, \epsilon(h_{\mathcal A}))$ to $\mathcal I'=(R,
M,\epsilon'(h_{\mathcal A'}))$. The composition
$$\mathcal A\stackrel{G}{\longrightarrow}S_{\A}\stackrel{\epsilon^{-1}}{\longrightarrow} \mathcal I\stackrel{J}{\longrightarrow} \mathcal I'\stackrel{\epsilon'}{\longrightarrow}S_{\A'}\stackrel{H'}{\longrightarrow}\mathcal A'$$
shows that $[\mathcal A]=[\mathcal A']$, and $\Gamma$ is an
injection.
Obviously, $\Gamma$ is   surjective.
\end {proof}
In \cite{Q2}, the author proved that each structure of a regular
Ann-category  of  type $(R,M)$ (that is, a structure satisfies the
{\it regular} condition, $\eta(x,x)=0$) is an element in the group
$Z^{3}_{Sh}(R,M)$ of Shukla 3-cocycles. From Classification Theorem
4.4 \cite{Q2} and Theorem \ref{dl14}, 
the following result is obtained.

\begin{corollary} There is an injection
$$H^{3}_{Sh}(R,M)\hookrightarrow H^{3}_{MacL}(R,M).$$
\end{corollary}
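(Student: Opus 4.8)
The plan is to realize the asserted injection as the inclusion of equivalence classes of regular Ann-categories into equivalence classes of all Ann-categories, transported through the two relevant classification bijections.

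First I would assemble the two ingredients. By Classification Theorem 4.4 of \cite{Q2}, the set $\mathbf{Ann}_{reg}[R,M]$ of equivalence classes of regular Ann-categories whose pre-stick is of type $(R,M)$ is in bijection with $H^3_{Sh}(R,M)$; here a regular Ann-category is one whose structure satisfies $\eta(x,x)=0$, equivalently $c_{A,A}=id$ for every object $A$. On the other hand, the preceding theorem provides a bijection $\Gamma:\mathbf{Ann}[R,M]\ri H^3_{MacL}(R,M)$ for the equivalence classes of all Ann-categories with pre-stick of type $(R,M)$. The map of the corollary is then the composite
$$H^3_{Sh}(R,M)\xleftarrow{\ \sim\ }\mathbf{Ann}_{reg}[R,M]\xrightarrow{\ \iota\ }\mathbf{Ann}[R,M]\xrightarrow{\ \sim\ }H^3_{MacL}(R,M),$$
where $\iota$ sends the class of a regular Ann-category to its class as an Ann-category.

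The heart of the matter is to show that $\iota$ is well defined and injective. Well-definedness is immediate, since an equivalence of regular Ann-categories is in particular an Ann-equivalence. For injectivity, the key point is that the regular condition is preserved under Ann-equivalence: if $(F,\breve F,\Fn):\mathcal A\ri\mathcal A'$ is an Ann-equivalence and $\mathcal A$ is regular, then compatibility of the symmetric monoidal functor $(F,\breve F)$ with the symmetry constraints gives $c'_{FA,FA}\circ\breve F_{A,A}=\breve F_{A,A}\circ F(c_{A,A})$; since $c_{A,A}=id$ and $\breve F_{A,A}$ is invertible this forces $c'_{FA,FA}=id$, and as $F$ is essentially surjective the naturality of $c'$ then yields $c'_{B,B}=id$ for every $B\in\mathcal A'$, so $\mathcal A'$ is regular. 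Because regular Ann-categories form a full subcategory of $\mathbf{Ann}$, any Ann-equivalence between two regular Ann-categories is automatically an equivalence of regular Ann-categories; hence two regular Ann-categories that become equivalent in $\mathbf{Ann}$ were already equivalent as regular Ann-categories, which is precisely the injectivity of $\iota$.

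Composing the two classification bijections with the injection $\iota$ then gives the desired injection $H^3_{Sh}(R,M)\hookrightarrow H^3_{MacL}(R,M)$. I expect the main obstacle to be exactly the injectivity of $\iota$, that is, verifying that the equivalence relation defining $H^3_{Sh}(R,M)$ is the restriction of the one defining $H^3_{MacL}(R,M)$; this reduces, as above, to the fullness of the inclusion together with the stability of the regular condition under equivalence, after which the corollary follows formally.
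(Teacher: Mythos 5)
Your proposal is correct and takes essentially the same route as the paper, which obtains the corollary precisely by composing Classification Theorem 4.4 of \cite{Q2} (regular Ann-categories with pre-stick of type $(R,M)$ $\leftrightarrow$ $H^{3}_{Sh}(R,M)$) with the bijection $\Gamma:{\bf Ann}[R,M]\rightarrow H^{3}_{MacL}(R,M)$ of the preceding theorem, via the inclusion of regular Ann-categories into all Ann-categories. Note only that your argument that regularity is preserved under Ann-equivalence, while correct, is not actually needed for the injectivity of $\iota$: regular Ann-categories form a full subcategory of ${\bf Ann}$ with the \emph{same} notion of equivalence, so two regular Ann-categories that are Ann-equivalent are by definition already equivalent as regular Ann-categories.
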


\section{Appendix: A categorical ring which is not an Ann-category}
Below, we construct a categorical ring which is not an Ann-category.

Let $R$ be a ring with a unit and $A$ be a $R$-bimodule. Then, one
constructs a categorical ring
  $\mathcal R$ as follows. First,  $\mathcal R$ is a category defined as in Section
 2. The objects of $\mathcal R$ are elements of $R$, the morphisms
 in $\mathcal R$ are automorphisms $(r,a):r\ri r$, $a\in A$.
Composition is the addition on  $A$. Operations
 $\oplus,\otimes$ on $\mathcal R$ are given by
 $$r\oplus s=r+s,\;\;(r,a)\oplus(s,b)=(r+s,a+b),$$
 $$r\otimes s=rs,\;\;(r,a)\otimes (s,b)=(rs,rb+as).$$
Suppose that the system  $(\mathcal R,\oplus,\otimes)$ has a left
distributivity constraint
 $$\lambda_{r,s,t}:r(s+t)\ri rs+rt$$
given by $\lambda_{r,s,t}=(\bullet,\lambda(r,s,t))$, where
 $\lambda:R^3\ri A$, and other constraints are strict. Then, the commutative diagrams in the axioms of
  a categorical ring are equivalent to
 the equations

\vspace{3pt} 
\noindent $R_1.\;r\lambda(s,t,u)-\lambda(rs,t,u)+\lambda(r,st,su)=0$,\\
$R_2.\;\lambda(r,s,t)u-\lambda(r,su,tu)=0$,\\
$R_3.\;\lambda(1,s,t)=0,$\\
$R_4.\;\lambda(r,s+t,u+v)+\lambda(r,s,t)+\lambda(r,u,v)=\lambda(r,s+u,t+v)+\lambda(r,s,u)+\lambda(r,t,v),$\\
$R_5.\;\lambda(r+r',s,t)=\lambda(r,s,t)+\lambda(r',s,t).$

\vspace{3pt}
Let $R$ be the ring of dual numbers on $\mathbb Z$,
$R=\{a+b\epsilon\ |\ a,b\in\mathbb Z,\epsilon^2=0\}$ and $A=\mathbb
Z\cong R/(\epsilon)$. Then, $A$ is a $R$-bimodule with the natural
actions
$$(a+b\epsilon)k=ak=k(a+b\epsilon).$$
The function $\lambda:R^3\ri A$, defined by
$$\lambda(a_r+b_r\epsilon,a_s+b_s\epsilon,a_t+b_t\epsilon)=b_r(a_s+a_t),$$
is satisfies the equations  $R_1-R_5$, so that
$\mathcal R$ is a categorical ring.

It is clear that if  $b_r\neq 0$ and $a_s\neq 0$, then
$\lambda(r,0,s)\neq 0$. Thus, by Theorem \ref{dl3}, $\mathcal R$ is
not an Ann-category.

\vspace{3pt}
One can deduce that:

1. Since the function $\lambda$ is not normalized,
$\widehat{h}=(0,\lambda,0,0)\notin Z^3_{MacL}(R,A)$. This means that
the classification theorem in \cite{JP} is wrong.

2. The   condition  $(U)$ in the following theorem is necessary.

\noindent{\bf Theorem 4} \cite {QHT}. {\it Each categorical ring
$\mathcal R$ is an Ann-category if and only if it satisfies the
following condition.

$(U):$  Each of  pairs $(L^A, \hat L^A)$, $(R^A, \hat R^A)$, $A\in
\mathcal R$, is an $\oplus$-functor which is compatible with the
unit constraint $(0,{\bf g,d})$ with respect  to the operation
$\oplus$.}

{\bf Acknowledgement} The author is much indebted to the referee,
whose useful observations greatly improved my exposition.

\begin{center}

\end{center}
Address: Department of Mathematics\\
\qquad\qquad\ Hanoi National University of Education\\
\qquad\qquad\ 136 Xuan Thuy Street,\\ 
 \qquad\qquad\ Hanoi, Vietnam.\\
Email: cn.nguyenquang@gmail.com
\pagestyle{myheadings}


\begin{thebibliography}{99}
\bibitem {FW}
 A. Fr\"{o}hlich and C. T. C Wall, {\it Graded monoidal categories}, Compositio Math. {\bf 28} (1974). 229-285.
\bibitem {JP}
 M. Jibladze and T. Pirashvili, \emph{ Third Mac Lane cohomology via categorical rings,} J. Homotopy Relat. Struct. {\bf 2} (2007) 187-216.

\bibitem {KV}
 M. M. Kapranov and V. A. Voevodsky, \emph{2-Categories and Zamolodchikov Tetrahedra Equations,} Proceedings of Symposia in Pure Mathematics, Vol. {\bf 56}  (1994), Part 2, 177-259\\
\bibitem{La1}
M. L. Laplaza, \emph{Coherence for distributivity,} Lecture Notes in Math, {\bf 281}  (1972), 29-65.\\
\bibitem{La2}
M. L. Laplaza, \emph{Coherence for Categories with Group Structure: an alternative approach,} J. Algebra, {\bf 84} (1983), 305-323.\\

\bibitem {ML}
 S. Mac Lane, \emph{Extensions and obstruction for rings,} Illinois J. Mathematics, {\bf 2} (1958), 316-345.

 \bibitem {PQT}
  C. T. K. Phung, N. T. Quang, N. T. Thuy, \emph{Relation between Ann-categories and ring categories}, Comm. Korean Math. Soc. {\bf 25}, No 4 (2010), 523-535.

\bibitem {Q1}
 N. T. Quang, \emph{Introduction to Ann-categories,} J. Math. Hanoi, No.15, {\bf 4} (1987), 14-24. arXiv:math.CT/0702588v2 21 Feb 2007.\\
\bibitem {Q2}
 N. T. Quang, \emph{Structure of Ann-categories and Mac Lane-Shukla cohomology,}  East-West J.  Mathematics, Vol {\bf 5} , No 1 (2003), 51-66.\\

\bibitem {QHT}
 N. T. Quang, D. D. Hanh and N. T. Thuy, \emph{On the Axiomatics of Ann-categories,} JP Journal of Algebra, Number Theory and Applications, Vol {\bf 11} , No 1 (2008), 59-72.\\
\bibitem {QH1} N.  T.  Quang, D. D. Hanh, \emph{Homological classification of Ann-functors}, {East-West J. of Mathematics}, Vol {\bf 11}, No 2 (2009),  195-210.




\bibitem {SR}
 N. Saavedra Rivano, {\it Cat\'{e}gories Tannakiennes,} Lecture Notes in Mathematics, Vol. {\bf 265},  Springer-Verlag Berlin and New York (1972). \\
\bibitem {Sh}
 U. Shukla, \emph{Cohomologie des algebras associatives,} Ann. Sci. Ecole Norm. Sup., {\bf 78} No 2 (1961), 163-209.\\

\bibitem {Si}
H. X. Sinh, \emph{Gr-cat\'{e}gories}, Universit\'{e} Paris VII,
Th\'{e}se de doctorat (1975).

\end{thebibliography}
\end{document}